\documentclass[12pt]{amsart}


\usepackage[utf8]{inputenc}
\usepackage[english]{babel}

\usepackage{mathrsfs}
\usepackage{amsthm}
\usepackage{amsmath}
\usepackage{amsfonts}
\usepackage{mathtools}
\mathtoolsset{showonlyrefs}
\usepackage{amssymb}

\usepackage{textcomp}
\usepackage{xcolor}

\usepackage{calrsfs}
\usepackage{csquotes}

\usepackage{tikz-cd}

\usepackage{comment}

\DeclareMathAlphabet{\mathcal}{OMS}{cmsy}{m}{n}

\linespread{1.2} \setlength{\topmargin}{-0.2in}
\setlength{\oddsidemargin}{0.4in}
\setlength{\evensidemargin}{0.0in} \setlength{\textwidth}{6in}
\setlength{\rightmargin}{0.0in} \setlength{\leftmargin}{0in}
\setlength{\textheight}{9in}

\vfuzz2pt
\hfuzz2pt

%
%

\theoremstyle{plain}
\newtheorem {theorem}{Theorem}[section]
\newtheorem {lemma}[theorem]{Lemma}
\newtheorem {corollary} [theorem]{Corollary}

\newtheorem {proposition} [theorem]{Proposition}
\theoremstyle{definition}
\newtheorem{definition}[theorem]{Definition}

\theoremstyle{remark}
\newtheorem{remark}[theorem]{Remark}

 \numberwithin{equation}{section} 
 

%
%
%
%


\newcommand{\R}{\mathbb{R}}

\newcommand{\N}{\mathbb{N}}

\newcommand{\D}{\mathcal{D}}

\newcommand{\oo}{\infty}

\newcommand{\e}{\varepsilon}
\newcommand{\s}{\sigma}

\newcommand{\w}{\omega}

\renewcommand{\d}{\partial}
\renewcommand{\a}{\alpha}

\renewcommand{\k}{\kappa}
\newcommand{\g}{\gamma}

\newcommand{\de}{\delta}
\newcommand{\De}{\Delta}

\newcommand{\cut}{\mathrm{cut}}
\newcommand{\cor}{\mathrm{cor}}

\newcommand{\vspan}{\mathrm{span}}

\newcommand{\mc}{\mathcal}
\newcommand{\mr}{\mathrm}

\newcommand{\wt}{\widetilde}

\newcommand{\be}{\begin{equation}}
\newcommand{\ee}{\end{equation}}

\renewcommand{\k}{\mathrm k}

\usepackage{tikz}
\usetikzlibrary{calc}
\usepackage{pgfplots}
\usetikzlibrary{decorations.markings}
\pgfplotsset{compat=1.18}

\renewcommand{\k}{\kappa}

\newcommand{\dsy}{\displaystyle}

\theoremstyle{definition}

\usepackage{float}

\title{Sharp regularity of sub-Riemannian length-minimizing curves}

\begin{document}

\author{A. Socionovo}

\email{alessandro.socionovo@ensta.fr, alessandro.socionovo@sorbonne-universite.fr}
\address{Laboratoire Jacques-Louis Lions, Sorbonne Universit\'{e}, Universit\'{e} de Paris, CNRS, Inria, Paris, France}
\address{Unit\'{e} de Math\'ematiques Appliqu\'ees (UMA), ENSTA, Institut Polytechnique de Paris, 91120 Palaiseau, France}

\maketitle

\begin{abstract} 
	A longstanding open question in sub-Riemannian geometry is the $C^\infty$-smoothness of length-minimizing curves (in their arc-length parameterization). A recent example answered this question negatively, showing the existence of a sub-Riemannian manifold with a length minimizer of class $C^2\setminus C^3$. In this paper, we study a broader class of sub-Riemannian structures containing that of the aforementioned example, and we prove that length-minimizing curves are of class $C^2$ within these structures. In particular, the aforementioned example is sharp (within this class of sub-Riemannian structures).
\end{abstract}

\tableofcontents

\section{Introduction}

One of the most difficult and important open problems in sub-Riemannian geometry is the regularity of geodesics, i.e., of those curves that are length-minimizing for short times (here and hereafter, by regularity of a curve we mean the regularity of its arc-length parameterization). The problem has remained open since the works of Strichartz in~\cite{Str86, Str89}.

A sub-Riemannian structure is a pair $(M,\D)$, where $M$ is a smooth manifold, $\D\subset TM$ is a smooth bracket-generating distribution (i.e., a distribution satisfying the H\"ormander condition). When $\D$ is endowed with a smooth metric $g$, we have a sub-Riemannian manifold $(M,\D,g)$. A {\em horizontal} (or, {\em admissible}) curve is an absolutely continuous trajectory in $M$ which is tangent to $\D$ almost everywhere. The length of horizontal curves is defined in the standard way with respect to the metric $g$ and the distance between two points $p,q\in M$ is the infimum of the length of horizontal curves joining $p$ and $q$. The Chow-Rashewskii and the Hopf-Rinow theorems ensure the existence of geodesics between every pair of points sufficiently close. 

Candidate curves to be length-minimizing are called {\em extremal curves} and they are individuated by some necessary conditions appearing in the Pontryagin maximum principle. Extremal curves are divided in two non-disjoint classes, which are called {\em normal} and {\em abnormal} (or {\em singular}) curves. Normal curves, which are the only ones appearing in Riemannian geometry, are smooth and length-minimizing for short times. Instead, abnormal curves are a priori no more regular than being absolutely continuous, and their length-minimality properties are also unknown. The difficulty of the problem of the regularity of length-minimizing curves lies in the presence of {\em strictly abnormal} (or {\em strictly singular}) extremals, i.e., those curves that are abnormal but not normal. We refer the reader to~\cite{ABB20,AgrSac,J14,Mon02,Rif14} for an exhaustive introduction to sub-Riemannian geometry and to the regularity problem of geodesics.

The first example of a strictly abnormal curve that is length-minimizing, was discovered by Montgomery in his ground-breaking work~\cite{Mon94}. After that, many other examples have been found, see for instance~\cite{AS96, LS95}. All these examples are smooth curves (here and hereafter, by smooth we mean $C^\oo$), indicating the profound complexity of the problem. Over the years, some partial results have been found. On the one hand, it was first proved in \cite{HL16, LM08} that abnormal length-minimizing curves cannot have corners, and later in \cite{HL23, MPV18} that they must have a tangent line. In some special cases they are of class $C^1$, see \cite{BCJPS20, BFPR22, LPS24}, but in general the $C^1$ regularity remains an open question. On the other hand, necessary conditions for the minimality of abnormal curves can be derived by the differential analysis of the end-point map (see \cite[Chapter 8]{ABB20} for a detailed introduction to the end-point map). This theory is well known up to the second order, see \cite{AgrSac, Goh66}, and it has been partially extended to the $n$-th order in \cite{BMS24}. Finally, an algorithm to produce abnormal curves in Carnot groups of sufficiently high dimension (without any new regularity or minimality result) is given in \cite{H20}.

A recent and significant progress for the theory is the following result. On $\R^3=(x_1,x_2,x_3)$, let $X_1$ and $X_2$ be the vector fields defined by
\begin{equation}
	\label{eq:horvf}
	X_1(x):=\d_{x_1}, \quad 
	X_2(x):=\d_{x_2} + P(x)^2\d_{x_3},
\end{equation}
where 
\begin{equation}
	\label{eq:P}
	P(x)=x_1^a-x_2^b, \quad a,b\in\N_{\geq1},
\end{equation}
and let $(\R^3,\D,g)$ be the sub-Riemannian manifold where $\D=\vspan\{X_1,X_2\}$ and $g$ is the metric making $X_1$ and $X_2$ orthonormal. Let also consider, for $\e>0$, the curve
\begin{equation}
	\label{eq:gammaintro}
	 \g_\e^{\pm,\pm}:[0,\e]\to\R^3, \quad 
	 \g_\e^{\pm,\pm}(t)=
	 \begin{cases}
	 	(\pm t^{q},\pm t,0), \quad b\geq a,\\
	 	(\pm t,\pm t^{q}, 0), \quad a\geq b,
	 \end{cases}
	 \quad q:=\frac{\max\{a,b\}}{\min\{a,b\}}.
\end{equation}
We simply denote by $\g_\e$ the curve $\g_\e^{+,+}$. If $a$ and $b$ are not multiples, every curve $\g_\e^{\pm,\pm}$ is of class $C^{\lfloor q\rfloor}\setminus C^{\lfloor q\rfloor+1}$ at $0$ in its arc-length reparameterization.

\begin{theorem}
	\label{thm:main1}
	If $a=2$ and $b\geq5$ is odd, then there exists $\e>0$ such that the curve $\g_\e$ is length-minimizing in $(\R^3,\D,g)$. Moreover, the curve $\g_\e$ is non-smooth at 0.
\end{theorem}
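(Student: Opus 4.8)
That $\g_\e$ is non-smooth at $0$ is immediate from the fact recalled just above: with $a=2$ and $b\geq5$ odd we have $q=b/2$, so $a$ and $b$ are not multiples of one another and $\g_\e$ is of class $C^{\lfloor q\rfloor}\setminus C^{\lfloor q\rfloor+1}=C^{(b-1)/2}\setminus C^{(b+1)/2}$ at $0$, with $(b-1)/2\geq2$ and $(b+1)/2<\infty$. The substance is minimality, and the plan is to reduce it to a planar variational problem. Since $\D=\ker(dx_3-P^2\,dx_2)$, a horizontal curve $\w=(\w_1,\w_2,\w_3)\colon[0,\tau]\to\R^3$ satisfies $\dot\w=\dot\w_1 X_1+\dot\w_2 X_2$, so its length is $\int_0^\tau\sqrt{\dot\w_1^2+\dot\w_2^2}\,dt$, the Euclidean length of $\bar{\w}\coloneqq(\w_1,\w_2)$; conversely, every planar $\bar{\w}$ lifts to the horizontal curve with $\w_3(t)=\w_3(0)+\int_0^t P(\bar{\w})^2\dot\w_2\,ds$. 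Hence $\w$ joins $0$ to $q_\e\coloneqq(\e^{q},\e,0)$ iff $\bar{\w}$ joins $(0,0)$ to $(\e^{q},\e)$ and $\int_0^\tau P(\bar{\w})^2\dot\w_2\,dt=0$; along $\bar{\g}_\e(t)=(t^q,t)$ one has $P\equiv0$, so this holds, and $\ell(\bar{\g}_\e)=\e+C\e^{b-1}+o(\e^{b-1})$ with $C\coloneqq\frac{q^2}{2(2q-1)}=\frac{b^2}{8(b-1)}$. So it suffices to prove: for $\e$ small, no planar curve from $(0,0)$ to $(\e^{q},\e)$ with $\int_0^\tau P(\bar{\w})^2\dot\w_2\,dt=0$ is shorter than $\bar{\g}_\e$.

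Suppose $\bar{\w}$ is such a curve, arc-length parameterized, $\tau=\ell(\bar{\w})<\ell(\bar{\g}_\e)$. From $\tau\geq\int_0^\tau|\dot\w_2|\,dt\geq\bigl|\int_0^\tau\dot\w_2\,dt\bigr|=\e$ we get $\e\leq\tau\leq\e+O(\e^{b-1})$; with $\sqrt{\dot\w_1^2+\dot\w_2^2}-|\dot\w_2|\geq\frac12\dot\w_1^2$ and $\int_0^\tau|\dot\w_2|\,dt=\e+2\int_{\{\dot\w_2<0\}}|\dot\w_2|\,dt$ this gives $\int_0^\tau\dot\w_1^2\,dt=O(\e^{b-1})$ and $\int_{\{\dot\w_2<0\}}|\dot\w_2|\,dt=O(\e^{b-1})$, so $\bar{\w}$ stays in a thin tube about $\{0\}\times[0,\e]$, where $|P|=|\w_1^2-\w_2^b|=O(\e^b)$, $\mathrm{Var}(\w_1)=O(\e^{b/2})$, and the backward $x_2$-motion is $O(\e^{b-1})$. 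If $\dot\w_2\geq0$ a.e., the constraint forces $P(\bar{\w})=0$ whenever $\dot\w_2>0$; as $\{P=0\}=\{x_1=\pm x_2^q\}\subset\{x_2\geq0\}$ has its two branches meeting only at $0$, a coarea argument in $x_2$ shows $\bar{\w}$ passes through $(\pm s^q,s)$ for a.e.\ $s\in(0,\e)$, whence $\ell(\bar{\w})\geq\int_0^\e\sqrt{1+q^2s^{2q-2}}\,ds=\ell(\bar{\g}_\e)$ (branch switches only add length), a contradiction.

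If instead $\dot\w_2$ changes sign, set $A\coloneqq\int_{\{\dot\w_2>0\}}P(\bar{\w})^2\dot\w_2\,dt=\int_{\{\dot\w_2<0\}}P(\bar{\w})^2|\dot\w_2|\,dt>0$. Since $|P|=O(\e^b)$ and the backward $x_2$-motion is $O(\e^{b-1})$, one gets $A=O(\e^{3b-1})$, so on the part where $\dot\w_2>0$ the value $\int P(\bar{\w})^2\dot\w_2\,dt=A$ is only polynomially small and $\bar{\w}$ must hug $\{P=0\}$ there ``as tightly as $\bar{\g}_\e$''. Now $\ell(\bar{\w})-\e=\bigl(\tau-\int_0^\tau|\dot\w_2|\,dt\bigr)+2\int_{\{\dot\w_2<0\}}|\dot\w_2|\,dt\geq\frac12\int_0^\tau\dot\w_1^2\,dt$; using the endpoint condition $\int_0^\tau\dot\w_1\,dt=\e^q$ and rescaling $\w_1(s)=\e^q\psi(s/\e)$ on the (projected) forward part --- under which the constraint becomes $\int_0^1(\psi^2-u^b)^2\,du=O(\e^{b-2})\to0$, forcing $\psi\to u^q$ --- lower semicontinuity of $\psi\mapsto\int\psi'^2$ yields $\frac12\int_0^\tau\dot\w_1^2\,dt\geq C\e^{b-1}(1-o(1))$. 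The role of $a=2$ and $b$ odd $\geq5$: it is $\bar{\g}_\e$, not the straight segment, that is extremal --- the segment is strictly shorter (equivalently $(b-2)^2>0$) but inadmissible, since $P=s^2(\e^{b-2}-s^{b-2})\not\equiv0$ along it.

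The main obstacle is promoting these estimates to the \emph{strict} inequality $\ell(\bar{\w})\geq\ell(\bar{\g}_\e)$. The comparison is first-order sharp --- every admissible curve shorter than $\bar{\g}_\e$ has length $\e+O(\e^{b-1})$, and the constrained minimum exceeds the unconstrained one only at order $\e^{b-1}$ --- so crude estimates give only $\ell(\bar{\w})\geq\e+C\e^{b-1}(1-o(1))$, not yet enough to beat $\ell(\bar{\g}_\e)=\e+C\e^{b-1}(1+o(1))$. Closing the gap calls for a scale-by-scale analysis of $\bar{\w}$: decomposing it into ``loops'' (sub-arcs returning near their starting point), bounding each loop's contribution to $\int P(\bar{\w})^2\dot\w_2\,dt$ by its length, and handling with care the region near $t=0$, where $\bar{\g}_\e$ concentrates all of its curvature and where a shortcut is a priori available --- together with the precise asymptotics of $\ell(\bar{\g}_\e)$. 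The monotone case and the non-smoothness claim, by contrast, are routine.
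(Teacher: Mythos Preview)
The paper does not prove this theorem: immediately after stating it, the author writes ``Theorem~\ref{thm:main1} is proved in~\cite{CJMRSSS24}'', and the remainder of the paper is devoted to the complementary non-minimality result (Theorem~\ref{thm:main2}). So there is no proof in this paper to compare your attempt against.

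As for the attempt itself: what you have written is, as you label it, a \emph{strategy}, and you explicitly concede the main gap in your last paragraph. The parts you do carry out are correct. The reduction to the planar constrained problem and the length formula $\ell(\bar\g_\e)=\e+C\e^{b-1}+o(\e^{b-1})$ are right; the a~priori bounds $\int\dot\w_1^2=O(\e^{b-1})$ and $\int_{\{\dot\w_2<0\}}|\dot\w_2|=O(\e^{b-1})$ for a hypothetical shorter competitor are correctly derived; the monotone case $\dot\w_2\geq0$ is fine; and your lower-semicontinuity heuristic does give $\ell(\bar\w)\geq\e+C\e^{b-1}(1-o(1))$.

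The genuine gap is exactly the one you name: closing that $o(\e^{b-1})$ discrepancy. Two remarks on this. First, nothing in your outline distinguishes $b=3$ from $b\geq5$; but by Theorem~\ref{thm:main2} of this very paper the curve $\g_\e$ is \emph{not} length-minimizing when $a=2$, $b=3$, so the hypothesis $b\geq5$ must enter precisely in the delicate step you have not done. Your sentence on ``the role of $a=2$ and $b$ odd $\geq5$'' only explains why the straight segment is inadmissible, which holds for every $b>2$ and so cannot be where the threshold lies. Second, the loop decomposition and sharp length-versus-enclosed-weighted-area estimates you allude to are indeed the substance of the argument in~\cite{CJMRSSS24}, and they are lengthy; gesturing at them is not a substitute for carrying them out. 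As it stands, this is an accurate map of the difficulty, not a proof.
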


Theorem \ref{thm:main1} is proved in \cite{CJMRSSS24} and provides the first ever examples of non-smooth sub-Riemannian length-minimizing curves, answering a longstanding open question in sub-Riemannian geometry. The geodesic with the lowest regularity is of class $C^2\setminus C^3$ and it is obtained for $a=2$ and $b=5$ (the singularity is at a boundary point).

In this paper, we delve into the reasoning and methodology behind the construction of the examples in \eqref{eq:horvf}-\eqref{eq:P} and we prove that Theorem \ref{thm:main1} is sharp, in the sense that it is not possible to find a geodesic of class $C^1\setminus C^2$ within these examples. Indeed, in this setting, every length minimizer is at least of class $C^1$ by \cite[Section 4]{LPS24}. Our main result is the following.

\begin{theorem}
	\label{thm:main2}
	If $b>a\geq3$, or $a=2$ and $b=3$, or $a>b\geq3$, or $b=2$ and $a=3$, then, for every $\e>0$, every curve $\g_\e^{\pm,\pm}$ is not length-minimizing.
\end{theorem}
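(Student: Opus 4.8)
The plan is to show that each curve $\g_\e^{\pm,\pm}$ fails to be length-minimizing by exhibiting a competitor horizontal curve with the same endpoints but strictly smaller length. The natural strategy, following the philosophy behind the construction in \cite{CJMRSSS24}, is to analyze the curve $\g_\e$ as an abnormal extremal of the structure \eqref{eq:horvf}-\eqref{eq:P} and to detect whether the relevant second-order (Goh / generalized index form) condition for minimality is satisfied. The key object is the function $P$ along $\g_\e$: for the distinguished curve $\g_\e = \g_\e^{+,+}$ one has $P(\g_\e(t)) \equiv 0$ by the very definition of $q$, so $\g_\e$ is horizontal and lies in the zero-set $\{P = 0\}$, which is exactly the abnormal condition here. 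First I would record that on such a curve the abnormal dynamics degenerate and the minimality question reduces to a sign/ordering question on the exponents $a$, $b$.

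The main mechanism is a cut-and-shortcut argument exploiting the geometry of $\{P = 0\}$. Because $P(x) = x_1^a - x_2^b$, the zero-set is a curve through the origin, and $\g_\e$ essentially traces it; the constraint $\dot x_3 = P(x)^2 \dot x_2$ along a horizontal curve means that $x_3$ "charges" like the square of the signed area enclosed between the competitor and $\{P=0\}$. The next step is therefore to build a competitor $\w$ that stays closer to (or crosses) $\{P=0\}$ in a way that makes $\int P(\w)^2\,$ small while paying a negligible length cost: one wants the competitor to cut across a "loop" and thereby reduce the $x_1$-$x_2$ planar length, using the $x_3$-budget that the minimizer wasted. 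Concretely, I would parametrize candidate competitors of the form "follow $\g_\e$, then insert a short detour near a well-chosen time", compute the length to second order in the detour size $\e$, and show that under the stated hypotheses on $(a,b)$ the first-order term of the length variation can be made strictly negative while the endpoint constraint in $x_3$ (which is quadratic, hence higher order) is still matched. The hypotheses $b>a\geq 3$, $a=2,b=3$, $a>b\geq3$, $b=2,a=3$ are exactly the complement of the range $a=2$, $b\geq 5$ odd handled by Theorem \ref{thm:main1}, so the exponent arithmetic should precisely distinguish "minimizing" from "non-minimizing": in the non-minimizing regime the planar length saving (order $\e^{\text{something}}$) dominates the cost of restoring $x_3$ (order $\e^{\text{larger}}$).

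I would organize the proof by cases matching the four clauses of the statement, but expect them to collapse into essentially two: the "small exponent" cases ($a$ or $b$ equal to $2$ or $3$, with the other equal to $3$), where $\g_\e$ is close to an analytic curve and one can shortcut almost explicitly, and the generic case $\min\{a,b\}\geq 3$, where one uses a scaling/blow-up argument — the anisotropic dilations adapted to \eqref{eq:horvf} — to reduce to a model computation and then apply the same cut-and-shortcut estimate. For the curves $\g_\e^{\pm,\pm}$ with mixed signs, which need not satisfy $P\equiv 0$, I would note that along them $P(\g_\e^{\pm,\pm}(t))$ has a fixed sign and magnitude comparable to $t^{\max\{a,b\}}$, so $x_3$ grows like $t^{2\max\{a,b\}+1}$; the competitor then has even more room and the argument is, if anything, easier. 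The hard part will be the length bookkeeping in the generic case: one must show that the second-order endpoint constraint in $x_3$ genuinely costs more length than the first-order planar gain, i.e., that the generalized index form associated to $\g_\e$ is not positive semidefinite when $\min\{a,b\}\geq 3$ (or in the listed small cases) — this is the place where the precise threshold "$a=2$, $b\geq 5$ odd" of Theorem \ref{thm:main1} must be matched exactly, and it is where a subtle cancellation (present only in that threshold range) is absent here, allowing the shortcut to work.
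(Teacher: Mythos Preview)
Your proposal outlines the right philosophy---build a competitor, trade planar length against the $x_3$-constraint, compare exponents---but it stops short of an actual proof. The paper's argument is completely explicit, and the places where you stay vague are exactly where the work lies.

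Concretely, the paper's competitor has two pieces: a \emph{straight cut} $\k_\rho$ from $0$ to $\g_\e(\rho)$ replacing $\g_\e|_{[0,\rho]}$, followed by $\g_\e|_{[\rho,\e]}$, and then the boundary of a small \emph{rectangle} of side $\sim\de$ at the endpoint. Stokes' theorem (Lemma~\ref{lem:Stokes}) turns the $x_3$-matching condition into a weighted-area identity, and a direct computation gives $\De_3^{\cut}(\rho)\sim\rho^{2b+1}$ and $\De_3^{\cor}(\de)\sim\de^3$, so the constraint is $\rho^{2b+1}\sim\de^3$. The planar length gain from the cut is $\De L(\rho)\sim\rho^{2q-1}$ and the cost of the rectangle is $\sim\de\sim\rho^{(2b+1)/3}$. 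The competitor is shorter iff $(2b+1)/3>2q-1$, i.e.\ $b+2>3b/a$, which is precisely the stated range for $b>a$. None of these exponents appear in your proposal, and without them there is no way to verify that the threshold comes out right. Your appeal to a ``generalized index form'' or a ``scaling/blow-up argument'' is not what is used and would not, on its own, produce the inequality $b+2>3b/a$.

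Two further points. First, your description of the detour (``insert a short detour near a well-chosen time'') misses the two-scale structure: a cut near the origin plus a separate rectangle at the far endpoint, each governed by its own small parameter. Without the rectangle you have no mechanism to restore $x_3$, and without separating $\rho$ and $\de$ you cannot balance the exponents. Second, your treatment of the sign variants is off: the paper does not claim that $P$ is nonzero along $\g_\e^{\pm,\pm}$ with mixed signs; rather, each such curve (when it is horizontal/abnormal) lies in $\{P=0\}$ and the proof is declared identical to the $\g_\e^{+,+}$ case. Your remark that ``$x_3$ grows like $t^{2\max\{a,b\}+1}$'' along those branches contradicts the definition $\g_\e^{\pm,\pm}(t)=(\pm t^q,\pm t,0)$ and would make those curves non-horizontal.
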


Since every $C^1$ but non-smooth abnormal curve in $(\R^3,\D)$ must contain $\g_\e$, $\g_\e^{+,-}$, $\g_\e^{-,+}$, or $\g_\e^{-,-}$ as a subarc (this is a consequence of \eqref{eq:mart_ex} and Lemma \ref{lem:Martinet} below, and of the arguments used in \cite[Section 4]{LPS24}), we have the following consequence.

\begin{corollary}
	For all $a,b\in\N_{\geq1}$, every geodesic in $(\R^3,\D,g)$ is of class $C^2$.
\end{corollary}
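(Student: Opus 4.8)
The plan is to deduce the corollary directly from Theorem~\ref{thm:main2} together with the classification of abnormal curves in $(\R^3,\D)$ recalled just before the statement. First I would recall the structure: in the sub-Riemannian manifold $(\R^3,\D,g)$ of~\eqref{eq:horvf}--\eqref{eq:P}, the abnormal set is the surface $\{P(x)=0\}$, and by the Martinet-type analysis (Lemma~\ref{lem:Martinet} and~\eqref{eq:mart_ex} below) every nonconstant abnormal curve is, up to reparameterization, a horizontal curve contained in $\{P=0\}$. Combining this with the arguments of~\cite[Section~4]{LPS24}, any abnormal length-minimizer that fails to be smooth at some point $p$ must, in a neighborhood of $p$ and in suitable coordinates, coincide with one of the four model arcs $\g_\e$, $\g_\e^{+,-}$, $\g_\e^{-,+}$, $\g_\e^{-,-}$ issuing from $p$ (the four branches of $\{P=0\}$ through the singular point of that surface), possibly glued at $p$.

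Next I would run the following dichotomy on a putative geodesic $\sigma$ in $(\R^3,\D,g)$. If $\sigma$ is normal on an open dense set, or more precisely if it is not strictly abnormal, then it is smooth by the classical theory of normal extremals, hence certainly $C^2$. If instead $\sigma$ has a point of non-smoothness, then by the previous paragraph it must contain one of the four model arcs $\g_\e^{\pm,\pm}$ as a subarc. But Theorem~\ref{thm:main2} asserts that whenever $b>a\ge 3$, or $(a,b)=(2,3)$, or $a>b\ge3$, or $(a,b)=(3,2)$, none of the arcs $\g_\e^{\pm,\pm}$ is length-minimizing for any $\e>0$; since a subarc of a length-minimizer is length-minimizing, this rules out a non-smooth geodesic in all those cases. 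The remaining cases are exactly $a=b$ (where $q=1$ and the model arcs are smooth, indeed analytic, so $\sigma$ is $C^2$ there), and the pairs $\{a,b\}=\{2,b\}$ with $b\ge5$ or $b$ even, or $\{a,b\}=\{1,b\}$; in each of these the exponent $\lfloor q\rfloor$ satisfies $\lfloor q\rfloor\ge 2$, so the arc $\g_\e^{\pm,\pm}$ — and hence any geodesic locally equal to it — is automatically of class $C^2$ by the regularity statement recorded after~\eqref{eq:gammaintro}. (When $q$ is an integer the arc is even smoother, but $C^2$ is all we need.)

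Assembling these pieces: in every case, either the geodesic is smooth, or it is non-smooth only along model arcs $\g_\e^{\pm,\pm}$ which are themselves of class $C^2$; in both situations $\sigma\in C^2$. I would then note that the only genuinely new input is Theorem~\ref{thm:main2}, which eliminates the low-regularity window $C^1\setminus C^2$ precisely in the ranges where $\lfloor q\rfloor=1$; all the other ranges are handled by the soft regularity count on the explicit curves. The main obstacle, and the reason the corollary is nontrivial, is the reduction step: one must be sure that the local-normal-form analysis of~\cite[Section~4]{LPS24} together with~\eqref{eq:mart_ex} and Lemma~\ref{lem:Martinet} really forces every non-smooth abnormal minimizer to contain one of the four specific arcs $\g_\e^{\pm,\pm}$ as a subarc, with no other non-smooth behavior possible and no loss from the gluing at the singular point of $\{P=0\}$; once that structural fact is in hand, the corollary follows by a short case check against Theorem~\ref{thm:main2} and the explicit $C^{\lfloor q\rfloor}$ regularity of the models.
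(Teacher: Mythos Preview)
Your argument is correct and follows exactly the strategy the paper sketches immediately before the corollary: invoke the structural reduction (from~\eqref{eq:mart_ex}, Lemma~\ref{lem:Martinet}, and \cite[Section~4]{LPS24}) that every $C^1$ but non-smooth abnormal minimizer contains one of the four arcs $\g_\e^{\pm,\pm}$ as a subarc, eliminate the $C^1\setminus C^2$ window via Theorem~\ref{thm:main2}, and then check that in every remaining pair $(a,b)$ one has $\lfloor q\rfloor\geq 2$ (or $q=1$), so the model arcs themselves are already $C^2$. One small inaccuracy worth fixing: the Martinet surface is $\{Q=0\}$, which for $a\geq 2$ equals $\{x_1=0\}\cup\{P=0\}$ (see~\eqref{eq:mart_ex2}) rather than just $\{P=0\}$; this does not affect your proof, since you ultimately rely on the paper's reduction rather than on your own description of $\Sigma_\D$.
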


{ Another consequence of Theorem \ref{thm:main1} is the existence of $C^1$-rigid curves that are not length-minimizing, a problem that has been pointed out by Montgomenry in his book \cite[Chapter 3]{Mon02}. A curve is $C^1$-rigid if it admits a $C^1$ neighborhood containing no other horizontal curves (up to reparameterization) with the same endpoints. In Lemma \ref{lem:C1-rigid} we prove that, for every $b>a$, the curve $\g_\e$ is $C^1$-rigid. As a consequence, we have the following result.

\begin{corollary}
	\label{cor:C1rigid-nonmin}
	For all $b>a\geq3$ and for $(a,b)=(2,3)$, the curve $\g_\e$ is a strictly abnormal and $C^1$-rigid curve which is not length-minimizing.
\end{corollary}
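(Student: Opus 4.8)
The corollary is an immediate combination of two facts established elsewhere in the paper. The plan is as follows. First, I would invoke Theorem \ref{thm:main2}, which in the regime $b>a\geq3$ (and for $(a,b)=(2,3)$) asserts that $\g_\e$ is not length-minimizing; this disposes of the minimality claim. Second, I would invoke Lemma \ref{lem:C1-rigid}, which states that for every $b>a$ the curve $\g_\e$ is $C^1$-rigid, i.e.\ it has a $C^1$-neighborhood containing no other horizontal curve with the same endpoints up to reparameterization; since $b>a$ holds in both cases under consideration, $\g_\e$ is $C^1$-rigid. It remains to observe that $\g_\e$ is strictly abnormal: by construction $\g_\e$ is an abnormal curve for $(\R^3,\D,g)$, and a $C^1$-rigid curve joining its endpoints cannot be normal, since normal extremals are smooth and length-minimizing for short times, whereas a nontrivial arc of $\g_\e$ is neither (it is non-smooth at $0$, and being non-minimizing rules out the normal alternative on every subarc containing $0$). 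Hence $\g_\e$ is strictly abnormal, $C^1$-rigid, and not length-minimizing, which is exactly the assertion.

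The only point requiring any care is the internal consistency of the parameter ranges: Theorem \ref{thm:main2} covers $b>a\geq3$ and $(a,b)=(2,3)$ among its cases, while Lemma \ref{lem:C1-rigid} is stated for all $b>a$, so both hypotheses are simultaneously available precisely on the set claimed in the corollary. I do not anticipate any genuine obstacle here, as the statement is a packaging of previously proved results; the substance lies entirely in Theorem \ref{thm:main2} and Lemma \ref{lem:C1-rigid}.
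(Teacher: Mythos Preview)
Your overall structure is correct and matches the paper: the corollary is indeed just the conjunction of Theorem~\ref{thm:main2} (non-minimality), Lemma~\ref{lem:C1-rigid} ($C^1$-rigidity), and the fact that $\g_\e$ is strictly abnormal. The paper states this explicitly at the end of Section~\ref{sec:notation}: ``Theorem~\ref{thm:main2} and Lemma~\ref{lem:C1-rigid} prove Corollary~\ref{cor:C1rigid-nonmin}.''

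Where you diverge from the paper is in the justification of strict abnormality, and there your argument has a gap. You appeal to non-smoothness of $\g_\e$ at $0$, but this fails whenever $a$ divides $b$: for instance, if $a=3$ and $b=6$ then $q=2$ and $\g_\e(t)=(t^2,t,0)$ is smooth. The paper notes that $\g_\e$ is of class $C^{\lfloor q\rfloor}\setminus C^{\lfloor q\rfloor+1}$ only when $a$ and $b$ are not multiples, so non-smoothness cannot carry the argument across the full range $b>a\geq3$. Your second argument---that Theorem~\ref{thm:main2} rules out minimality for \emph{every} $\e>0$, hence arbitrarily short initial arcs of $\g_\e$ fail to minimize, contradicting the local minimality of normal extremals---does work and is a valid route to strict abnormality. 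But it is more circuitous than necessary.

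The paper instead invokes Lemma~\ref{lem:str_abn}, which shows directly from the Hamiltonian equations that no parameterization of $\g_\e$ is normal whenever $a\neq b$; combined with the abnormality of $\g_\e$ (Remark~\ref{rem:P^2_1}, via Lemma~\ref{lem:Martinet}), this gives strict abnormality cleanly and independently of Theorem~\ref{thm:main2}. You should cite Lemma~\ref{lem:str_abn} rather than argue through non-smoothness or non-minimality.
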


}

The proof of Theorem \ref{thm:main2} is constructive, as it provides an explicit example of a shorter curve connecting the same points. This kind of construction is, in our opinion, a general interesting tool to understand (as a first analysis) the minimality of abnormal curves. Indeed, in these examples, the shorter curves we construct are a close approximation of normal curves, see~\cite[Proposition 2.2]{CJMRSSS24}. On the other hand, the proof makes explicit the analytical role of the parameters $a$ and $b$, which are in fact the discriminant for both the minimality and the regularity of the curve $\g_\e$. Instead, the intrinsic and geometric nature of these parameters persists as an enigma, and we believe that understanding it is a deep and central question. Finding an abstract argument that unifies all these points --the explicit construction in the proof of Theorem \ref{thm:main2}, approximation of normal curves, and the interplay between analytical and geometric aspects of the parameters $a$ and $b$-- would represent, in our opinion, a significant breakthrough for the theory, which may leads either to find an example of a $C^1\setminus C^2$ length-minimizing curve, or to prove the $C^2$ (or $C^1$) regularity, or to find an example of a non-smooth geodesic with internal singular point. Ultimately, Theorems \ref{thm:main1} and \ref{thm:main2} may open new frontiers of research in sub-Riemannian geometry.

\medskip

We conclude this introductory part with an outline of the paper.

In Section~\ref{sec:notation}, we introduce a class of sub-Riemannian structures on $\R^3$ that generalizes those defined in \eqref{eq:horvf}-\eqref{eq:P}, see Definition~\ref{def:general_setting}, and we study their general properties, including a geometric characterization of abnormal curves in Lemma \ref{lem:Martinet}. We also provide some concrete examples and we explain the ideas behind the construction of the examples in~\eqref{eq:horvf}-\eqref{eq:P}, whose structure (in particular the square of $P$) is inspired by the Liu-Sussmann example in~\cite{LS95}.

In Section 3, we prove Theorem~\ref{thm:main2}. The proof is written only for $\g_\e$, since the other cases for $\g_\e^{+,-}$, $\g_\e^{+,-}$, and $\g_\e^{-,-}$ are identical. The proof relies upon the construction of a competing curve which is shorter than $\g_\e$. Since every horizontal curve is implicitly defined by its first two coordinates, the construction of the competitor can be reduced to a constrained problem in $\R^2$, see Definition~\ref{def:plane_comp}. The constraint we obtain is, due to the Stokes theorem, of isoperimetric type with a weighted area, see Lemma~\ref{lem:Stokes}. The competing curve (actually, its projection onto the $(x_1,x_2)$-plane) is then built as follows: we cut $\g_\e$ near the origin with a segment connecting 0 to the point $\g_\e(\rho)$, for some $0<\rho<\e$; then we follow the curve $\g_\e$ up to its end-point; finally, we add the boundary of a square of length $0<\de<\e$ at the end-point of $\g_\e$. Thanks to Remark~\ref{rem:eucl}, the sub-Riemannian length of both $\g_\e$ and the competitor is measured by the Euclidean length of their projection on the $(x_1,x_2)$-plane. Therefore, the cut produces a gain of length $\De L(\rho)$, but it modifies the third coordinate. The square is then needed to restore the third coordinate to 0, despite it wastes an amount of length equal to $\de$. The proof consists in showing that the parameters $\rho,\de>0$ can be chosen such that the length gained with the cut is greater than that lost with the boundary of the square, i.e., such that $\De L(\rho) > \de$.

\medskip
{\bf Research funding.} 
This project has received funding from the European Union’s Horizon 2020 research and innovation programme under the Marie Sk{\l}odowska-Curie grant agreement No 101034255. \includegraphics[width=0.65cm]{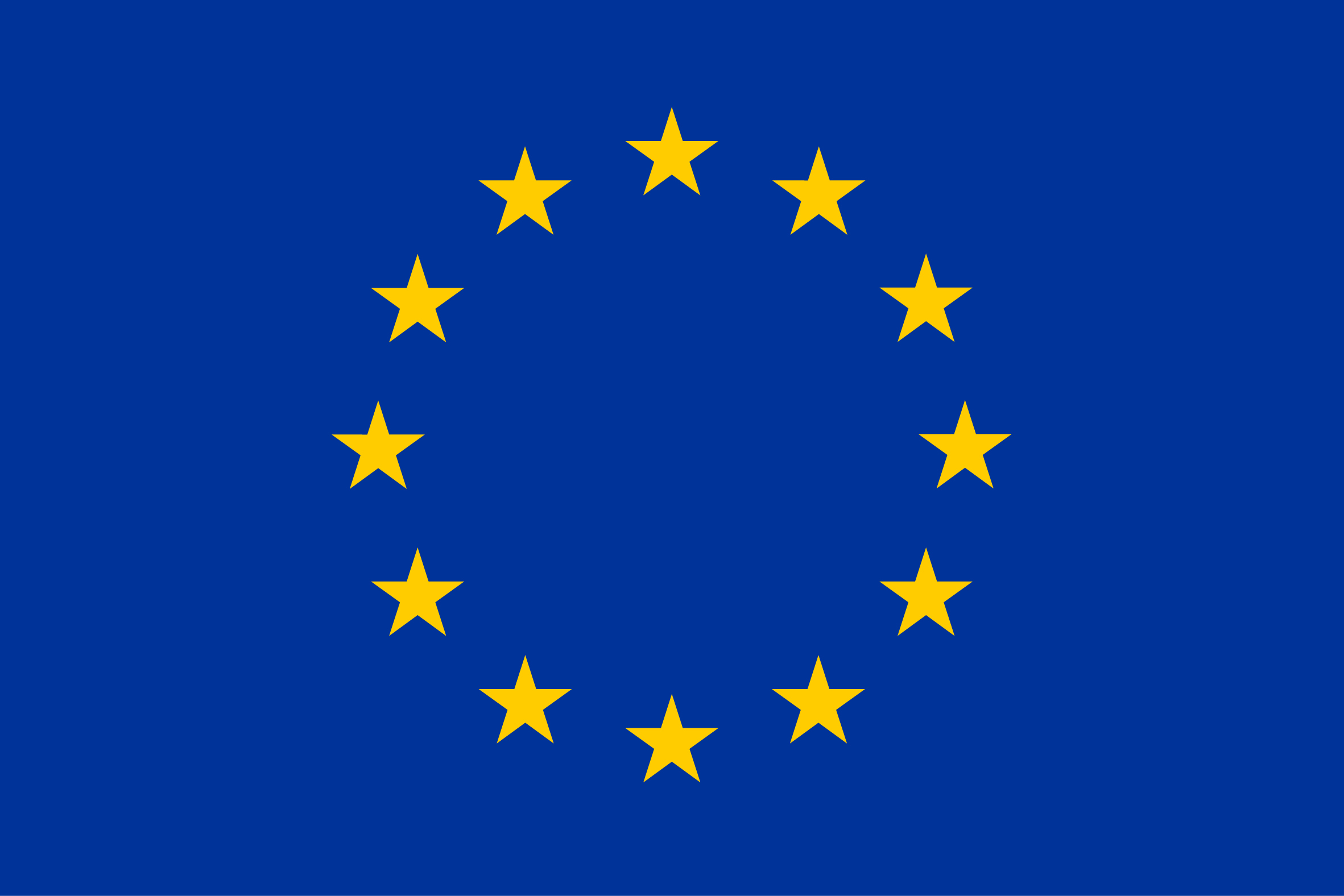}

This project has received funding from the Fondazione Ing. Aldo Gini of Padova, Italy.

This project benefited from the support of the FMJH Program PGMO.





\section{Sub-Riemannian structures on $\R^3$}
\label{sec:notation}

In this section, we study the class of sub-Riemannian structures on $\R^3$ defined in Definition \ref{def:general_setting} below and their general properties, we provide some examples, and we explain the ideas behind the construction of the structure defined in \eqref{eq:horvf}-\eqref{eq:P}.

\begin{definition}
	\label{def:general_setting}
	On $\R^3$ with the coordinates $x=(x_1,x_2,x_3)$ we define the vector fields
	\begin{equation}
		X_1(x)=\frac{\d}{\d x_1}, \quad X_2(x)=\varphi(x)\frac{\d}{\d x_2} + \psi(x)\frac{\d}{\d x_3},
	\end{equation}
	where $\varphi,\psi\in C^\oo(\R^3,\R)$ are such that $X_1$ and $X_2$ satisfy the H\"ormander condition. We also define the distribution $\D := \vspan\{X_1, X_2\}$ and the sub-Riemannian metric $g$ that makes $X_1$ and $X_2$ orthonormal.
\end{definition}

\begin{definition}
An absolutely continuous curve $\eta:[0,\tau]\to\R^3$ is {\em admissible} or {\em horizontal} in $(\R^3,\D)$ if there exists a function $u=(u_1,u_2)\in L^2([0,\tau],\R^2)$, called the {\em control} of $\eta$, such that
\begin{equation}
	\label{eq:doteta}
	\dot\eta(t)=u_1X_1(\eta(t))+u_2X_2(\eta(t)), \quad \text{for a.e. } t\in[0,\tau].
\end{equation} 
\end{definition}

	By~\eqref{eq:doteta} it follows that a horizontal curve $\eta:[0,\tau]\to\R^3$ and its control $u$ satisfy, for almost every $t\in[0,\tau]$,
	\begin{equation}
		\dot\eta_1(t)=u_1(t), \quad \dot\eta_2(t)=u_2(t)\varphi(\eta(t)), \quad \dot\eta_3(t)=u_2(t)\psi(\eta(t)).
	\end{equation}
	If $\varphi\circ\eta\neq0$, then we have $\dsy u_2(t)=\frac{\dot\eta_2(t)}{\varphi(\eta(t))}$, for a.e. $t\in[0,\tau]$, and thus the third coordinate of $\eta$ is uniquely determined by the first two coordinates of $\eta$ via the integral constraint
	\begin{equation}
		\label{eq:3coord}
		\eta_3(t)=\eta_3(0)+\int_0^t \dot\eta_2(s)\frac{\psi(\eta(s))}{\varphi(\eta(s))}ds, \quad \text{for all $t\in[0,\tau]$}.
	\end{equation}

\begin{definition}
	Let $\eta:[0,\tau]\to\R^3$ be a horizontal curve in $(\R^3,\D,g)$ and let $x,y\in\R^3$ be two points. We say that:
	\begin{itemize}
		\label{def:sRlen}
		\item[(i)] the sub-Riemannian length of $\eta$ is
		\begin{equation}
			\label{eq:deflen}
			L_{\mr{sR}}(\eta):=\int_0^\tau \sqrt{g\big(\dot\eta(t),\dot\eta(t)\big)}\,dt;
		\end{equation}
	
		\item[(ii)] the sub-Riemannian distance between $x$ and $y$ is
		\begin{equation}
			d(x,y):=\inf\{L_{\mr{sR}}(\wt\eta)\mid \wt\eta :[0,\tau]\to\R^3 \text{ is horizontal}, \; \wt\eta(0)=x, \; \wt\eta(\tau)=y\};
		\end{equation}
	
		\item[(iii)] $\eta$ is length-minimizing if for every horizontal curve $\wt\eta:[0,\tau]\to\R^3$ such that $\wt\eta(0)=\eta(0)$ and $\wt\eta(\tau)=\eta(\tau)$, we have $L_{\mr{sR}}(\eta)\leq L_{\mr{sR}}(\wt\eta)$.
	\end{itemize}
\end{definition}
	
The fact that the distance in (ii) is well defined (i.e. $d(x,y)<+\oo$ for all $x,y\in\R^3$) is a direct consequence of the H\"ormander condition and it is well-known as Chow-Rashewskii theorem, see for instance~\cite{ABB20}. Consequently, also the definition of length-minimality in (iii) is well posed. 

\begin{remark}
	\label{rem:eucl}
	If $\varphi\equiv1$ every horizontal curve $\eta$ has control $u=(\dot\eta_1,\dot\eta_2)$. In this case, since $g$ is the metric making $X_1,X_2$ orthonormal, we have that the sub-Riemannian length of $\eta$ is the Euclidean length of the plane curve $(\eta_1,\eta_2)$.
\end{remark}
	
In the literature, necessary conditions (of the first order) for horizontal curves to be length-minimizing can be obtained in two different ways. The first possible approach is to study the first differential of the end-point map together with the Lagrange multiplier rules. The other way is to use the Pontryagin maximum principle provided by classical control theory. Both the methods are well illustrated in~\cite{ABB20, AgrSac}.
	
\begin{proposition}
	\label{prop:PMP}
	Let $\eta:[0,\tau]\to\R^3$ be a length-minimizing curve in $(\R^3,\D,g)$. Then there exists an absolutely continuous function of covectors $p:[0,\tau]\to(\R^3)^*$ such that, for $i=1,2$ and for all $t\in[0,\tau]$, one of the following conditions holds
	\begin{align}
		&\label{eq:normal} p(t)\big(X_i(\eta(t))\big)=g\big(\dot\eta(t),X_i(\eta(t))\big),\\
		&\label{eq:abnormal}p(t)\big(X_i(\eta(t))\big)=0, \quad \text{with } p(t)\neq0.
	\end{align}
\end{proposition}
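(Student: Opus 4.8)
\emph{Proof plan.} The statement is precisely the first-order necessary condition (the Pontryagin maximum principle) for length-minimality, specialized to the structure of Definition~\ref{def:general_setting}, so the plan is to derive it from the classical control-theoretic machinery; see~\cite[Chapter~8]{ABB20} and~\cite{AgrSac} for the general theory. First I would reduce to a problem with a smooth cost. Reparameterizing $\eta$ by a constant multiple of its arc length changes neither its image nor its length-minimality and rescales $\dot\eta$ (hence both sides of~\eqref{eq:normal}) by the same constant, so one may assume $\eta$ has constant speed; in that parameterization $\eta$ minimizes the energy $J(u)=\tfrac12\int_0^\tau\big(u_1(t)^2+u_2(t)^2\big)\,dt$ among all controls $u\in L^2([0,\tau],\R^2)$ steering $\eta(0)$ to $\eta(\tau)$ in time $\tau$, and the associated control $u=(u_1,u_2)$ is bounded.

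Next I would apply the maximum principle to this fixed-time, fixed-endpoints energy-minimization problem. Introducing the control-dependent Hamiltonian on $T^*\R^3\times\R^2$,
\begin{equation}
	H_u(\xi,x)=u_1\,\xi\!\left(X_1(x)\right)+u_2\,\xi\!\left(X_2(x)\right)+\frac{p_0}{2}\left(u_1^2+u_2^2\right),\qquad p_0\in\{0,-1\},
\end{equation}
the principle produces a scalar $p_0$ and an absolutely continuous lift $t\mapsto(p(t),\eta(t))\in T^*\R^3$ with $(p_0,p(t))\neq(0,0)$ for all $t$, satisfying the Hamiltonian system of $H_u$ together with the pointwise maximality condition $H_{u(t)}(p(t),\eta(t))=\max_{v\in\R^2}H_v(p(t),\eta(t))$ for a.e.\ $t$. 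Since $X_1,X_2$ are smooth and $u\in L^\infty$, the functions $t\mapsto p(t)\!\left(X_i(\eta(t))\right)$ are absolutely continuous. Equivalently, one may instead run the Lagrange multiplier rule for the end-point map $E\colon u\mapsto\eta_u(\tau)$ against the energy $J$ and take $p(t)$ to be the pull-back of the multiplier covector along the adjoint flow; computing the differential of $E$ then reproduces the same relations.

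Finally I would unwind the two alternatives. If $p_0=-1$, the map $v\mapsto H_v(p(t),\eta(t))$ is strictly concave, so its maximum is attained where $\partial_{v_i}H=0$, i.e.\ $u_i(t)=p(t)\!\left(X_i(\eta(t))\right)$; since $X_1,X_2$ are $g$-orthonormal and $\dot\eta=u_1X_1(\eta)+u_2X_2(\eta)$, we get $u_i(t)=g\!\left(\dot\eta(t),X_i(\eta(t))\right)$, which is~\eqref{eq:normal} — first a.e.\ and then for every $t$, because in this case $u$ has a continuous representative (so $\eta\in C^1$) and both sides are continuous. If $p_0=0$, then $H_v(p(t),\eta(t))=v_1\,p(t)\!\left(X_1(\eta(t))\right)+v_2\,p(t)\!\left(X_2(\eta(t))\right)$ is linear in $v\in\R^2$, hence can have a finite maximum only if $p(t)\!\left(X_i(\eta(t))\right)=0$ for $i=1,2$, a.e.\ and therefore, by the absolute continuity above, for all $t$; moreover $(0,p(t))\neq(0,0)$ forces $p(t)\neq0$, giving~\eqref{eq:abnormal}. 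The only slightly delicate points are bookkeeping: the reduction to a constant-speed parameterization and the upgrade from the a.e.\ maximality condition to the everywhere-valid identities in~\eqref{eq:normal}--\eqref{eq:abnormal}, both handled by the smoothness of $X_1,X_2$ and the absolute continuity of the lift; there is no substantial analytic obstacle here, the content being entirely in invoking the maximum principle correctly.
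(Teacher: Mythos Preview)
Your sketch is correct and follows the standard route via the Pontryagin maximum principle (with the usual reduction to the energy functional and the normal/abnormal dichotomy on $p_0$). The paper does not actually prove this proposition: it is stated as a classical fact, with the sentence preceding it pointing to the two standard derivations (end-point map plus Lagrange multipliers, or the Pontryagin maximum principle) and referring the reader to~\cite{ABB20, AgrSac}; your argument is precisely a specialization of that machinery to the structure of Definition~\ref{def:general_setting}, so there is nothing to compare.
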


If $\eta:[0,\tau]\to\R^3$ is horizontal and there exists a function of covectors $p:[0,\tau]\to(\R^3)^*$ such that~\eqref{eq:normal} or~\eqref{eq:abnormal} holds, then $\eta$ is called an {\em extremal curve}. Extremal curves satisfying~\eqref{eq:normal} are called {\em normal}, while extremal curves satisfying~\eqref{eq:abnormal} are called {\em abnormal}. The conditions in~\eqref{eq:normal}-\eqref{eq:abnormal} are mutually exclusive only if the function of covectors $p$ is fixed, i.e., an extremal curve can be both normal and abnormal with different covectors. A curve that is only normal (resp., only abnormal) is called {\em strictly normal} (resp., {\em strictly abnormal}). 

We briefly recall that normal curves are solution to the normal Hamiltonian equations. Here and hereafter, we use the short notation $\varphi_{x_i},\psi_{x_i}$ to denote the partial derivatives of the functions $\varphi$ and $\psi$ (see Definition \ref{def:general_setting}) with reaspect to $x_i$, $i=1,2,3$. Defining the normal Hamiltonian of $(\R^3,\D,g)$ as
\begin{equation}
	\label{eq:H}
	H:\R^3\times(\R^3)^*\to \R, \quad H(x,p):= \frac12 p_1^2 + \frac12 h(x,p)^2,
\end{equation} 
where $h(x,p):=p_2\varphi(x)+p_3\psi(x)$, we have that normal curves with their covectors are solution to the system of equations
\begin{equation}
	\label{eq:Hameqs}
	\begin{cases}
		\dot x_1=p_1\\
		\dot x_2=h(x,p)\varphi(x)\\
		\dot x_3=h(x,p)\psi(x),
	\end{cases}
	\begin{cases}
		\dot p_1=-h(x,p)(p_2\varphi_{x_1}(x)+p_3\psi_{x_1}(x))\\
		\dot p_2=-h(x,p)(p_2\varphi_{x_2}(x)+p_3\psi_{x_2}(x))\\
		\dot p_3=-h(x,p)(p_2\varphi_{x_3}(x)+p_3\psi_{x_3}(x)).
	\end{cases}
\end{equation}
We refer the reader to~\cite{ABB20, Rif14} for all these facts, including a proof of~\eqref{eq:Hameqs}.

We now focus on the study of the abnormal curves in $(\R^3,\D)$, aiming to provide their geometric characterization. As a matter of fact, the abnormal condition~\eqref{eq:abnormal} does not depend on the metric. We start by recalling that necessary conditions of the second order for abnormal curves to be length-minimizing, known as Goh conditions, can be obtained by the study of the second-order differential of the end-point map. Moreover, the optimality  assumption is unnecessary when $\dim(\D_x)=2$, for all $x\in\R^3$. Thus, every abnormal curve $\eta$ in $(\R^3,\D)$ with its function of covectors $p$ satisfies
\begin{equation}
	\label{eq:goh}
	p(t)([X_1,X_2](\eta(t)))=0, \quad \mr{for \; all} \; t\in[0,\tau],
\end{equation}
where $[\cdot,\cdot]$ denotes the Lie bracket of vector fields. For more details about Goh conditions and for a proof of~\eqref{eq:goh} we refer the reader to~\cite{ABB20}.

Combining~\eqref{eq:abnormal} and~\eqref{eq:goh} we deduce the desired geometric characterization for abnormal curves in $(\R^3,\D)$.

\begin{lemma}
	\label{lem:Martinet}
	A horizontal curve $\eta:[0,\tau]\to\R^3$ is abnormal in $(\R^3,\D)$ if and only if it is supported inside the surface
	\begin{equation}
		\label{eq:martinet_surf}
		\Sigma_\D:=\{x\in\R^3\mid \varphi(x)\psi_{x_1}(x)-\psi(x)\varphi_{x_1}(x)=0\}.
	\end{equation}
\end{lemma}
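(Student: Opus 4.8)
The plan is to prove Lemma~\ref{lem:Martinet} by directly translating the two necessary conditions \eqref{eq:abnormal} and \eqref{eq:goh} into a pointwise condition on the curve, exploiting the explicit form of $X_1$, $X_2$ and their bracket. First I would compute the Lie bracket $[X_1,X_2]$. Since $X_1=\d_{x_1}$ and $X_2=\varphi\,\d_{x_2}+\psi\,\d_{x_3}$, a one-line computation gives
\begin{equation}
	[X_1,X_2](x)=\varphi_{x_1}(x)\frac{\d}{\d x_2}+\psi_{x_1}(x)\frac{\d}{\d x_3}.
\end{equation}
Now let $\eta$ be a horizontal curve with covector $p=(p_1,p_2,p_3)$. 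Conditions \eqref{eq:abnormal} for $i=1,2$ together with \eqref{eq:goh} say that, at every $t$, the nonzero covector $p(t)$ annihilates the three vectors $X_1(\eta(t))$, $X_2(\eta(t))$, and $[X_1,X_2](\eta(t))$. Pairing $p$ with $X_1$ gives $p_1=0$; pairing with $X_2$ gives $p_2\varphi+p_3\psi=0$; pairing with $[X_1,X_2]$ gives $p_2\varphi_{x_1}+p_3\psi_{x_1}=0$ (all evaluated along $\eta$). So the abnormality of $\eta$ is equivalent to the existence of $(p_2,p_3)\neq(0,0)$ solving the linear system with matrix $\begin{pmatrix}\varphi & \psi\\ \varphi_{x_1} & \psi_{x_1}\end{pmatrix}$ along $\eta$.

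The next step is to observe that such a nonzero $(p_2,p_3)$ exists if and only if that $2\times2$ matrix is singular, i.e. $\varphi\psi_{x_1}-\psi\varphi_{x_1}=0$ along $\eta$, which is exactly the condition defining $\Sigma_\D$ in \eqref{eq:martinet_surf}. For the forward implication I would take $\eta$ abnormal, pick its covector $p$ from \eqref{eq:abnormal}, note $p_1=0$ hence $(p_2,p_3)\neq(0,0)$, and conclude the determinant vanishes along $\eta$, so $\eta$ is supported in $\Sigma_\D$. For the converse I would take $\eta$ horizontal and supported in $\Sigma_\D$; the determinant condition lets me pick, for each $t$, a nonzero vector in the kernel — but to produce a genuine absolutely continuous covector $p(t)$ I should check measurability/regularity of this choice and verify that the resulting $p$ can be taken to satisfy the covector equation associated to the abnormal flow (not just the pointwise orthogonality). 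In the rank-one generic case one can write $(p_2(t),p_3(t))$ explicitly, e.g. proportional to $(\psi_{x_1},-\varphi_{x_1})$ or to $(-\psi,\varphi)$ along $\eta$, and set $p_1\equiv0$; one then checks this is a legitimate choice of Lagrange multiplier, using that when $\dim\D_x=2$ everywhere the only constraint beyond \eqref{eq:abnormal} is the Goh condition \eqref{eq:goh}, which I have arranged to hold.

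The main obstacle I anticipate is precisely the converse direction: being ``supported in $\Sigma_\D$'' is a pointwise algebraic condition, whereas abnormality requires an absolutely continuous covector curve solving a differential relation, so one must argue that the pointwise kernel choice can be made into such a curve. I would handle the possible locus where $\varphi=\psi=\varphi_{x_1}=\psi_{x_1}=0$ along $\eta$ (where the kernel is all of $\R^2$) separately or note it is negligible, and otherwise use the smooth explicit normalization of $(p_2,p_3)$ described above, checking it satisfies the adjoint equation. It may also be cleanest to invoke the standard fact (see \cite{ABB20}) that for a horizontal curve contained in the ``Martinet-type'' surface where $X_1,X_2,[X_1,X_2]$ are linearly dependent, abnormality is automatic, thereby reducing the converse to the bracket computation already done.
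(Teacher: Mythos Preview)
Your approach is essentially identical to the paper's: compute $[X_1,X_2]$, reduce \eqref{eq:abnormal} and \eqref{eq:goh} to the $2\times2$ linear system in $(p_2,p_3)$ with matrix $\begin{pmatrix}\varphi & \psi\\ \varphi_{x_1} & \psi_{x_1}\end{pmatrix}$, and conclude via the vanishing of its determinant. For the converse the paper is exactly as brief as you fear---it simply picks, for each $t$, a nonzero $(p_2(t),p_3(t))$ in the kernel and declares $p=(0,p_2,p_3)$ an abnormal covector, without verifying absolute continuity or any adjoint equation---so your extra caution there, while legitimate, already goes beyond what the paper itself does.
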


\begin{proof}
Let $\eta:[0,\tau]\to\R^3$ be an abnormal curve and let $p=(p_1,p_2,p_3):[0,\tau]\to(\R^3)^*$ be its covector. By~\eqref{eq:abnormal} with $i=1$ we get $p_1=0$. Then by~\eqref{eq:abnormal} with $i=2$ and by~\eqref{eq:goh} we have that $\eta$ satisfies, for all $t\in[0,\tau]$,
\begin{equation}
	\label{eq:system_martinet}
	\begin{cases}
		p_2(t)\varphi(\eta(t)) + p_3(t)\psi(\eta(t))=0,\\
		p_2(t)\varphi_{x_1}(\eta(t)) + p_3(t)\psi_{x_1}(\eta(t))=0.
	\end{cases}
\end{equation}
Since $(p_2(t),p_3(t))\neq 0$, we have that for all $t\in[0,\tau]$ the determinant of the matrix of the system~\eqref{eq:system_martinet} is forced to be 0, that is
\begin{equation}
	\label{eq:det=0}
	(\varphi\psi_{x_1}-\psi\varphi_{x_1})(\eta(t))=0.
\end{equation}

Conversely, if $\eta$ is supported in $\Sigma_\D$ then it holds~\eqref{eq:det=0}. Thus for every $t\in[0,\tau]$ there exists $(p_2(t),p_3(t))\neq0$ such that~\eqref{eq:system_martinet} holds. Consequently, the covector $p:[0,\tau]\ni t\mapsto(0,p_2(t),p_3(t))\in(\R^3)^*$ ensures that $\eta$ is abnormal.
\end{proof}

\begin{definition}
	The set $\Sigma_\D$ defined in~\eqref{eq:martinet_surf} is the {\em Martinet surface} of $\D$.
\end{definition}

We conclude this section providing some examples of distributions of the type appearing in Definition~\ref{def:general_setting}, which vary depending on the functions $\varphi$ and $\psi$. The aim is to study Martinet surfaces and abnormal curves within these examples. In particular, we mainly focus on the case where $\varphi,\psi$ are chosen as in~\eqref{eq:horvf}-\eqref{eq:P}, and we explain through an heuristic argument why the curve $\g_\e=\g_\e^{+,+}$ defined in~\eqref{eq:gammaintro} emerges as a natural candidate to be a non-smooth geodesic for every $b>a\geq2$ and for small $\e>0$.

\medskip

\noindent{\bf The Heisenberg group.} This represents the simplest and most famous example of a sub-Riemannian manifold. It is obtained by setting $\varphi=1$ and $\psi=x_1$. In this example, the Martinet surface is empty, so there are no abnormal curves. The Heisenberg group has been extensively studied in the literature, and interested readers are directed for example to~\cite{Mon02}.

\medskip

\noindent{\bf The Martinet example.} This example involves a slight modification of the Heisenberg group by choosing $\varphi=1$ and $\psi=x_1^2$. Consequently, the Martinet surface is no longer empty and is given by the plane $x_1=0$. Thus, for $\e>0$, the curve
\begin{equation}
	\label{eq:alfa}
	\a_\e:[0,\e]\to\R^3, \quad \a(t)=(0,t,0),
\end{equation}
is abnormal. Since $\a_\e$ is a segment in $\R^2$, by Remark~\ref{rem:eucl} it is optimal in $(\R^3,\D,g)$ for all $\e>0$, providing an example of an abnormal length-minimizing curve. However, the curve $\a_\e$ is not strictly abnormal since it is also normal in $(\R^3,\D,g)$ with the constant covector $p=(0,1,0)$.

\medskip

\noindent{\bf The Liu-Sussmann example.} This example, credited to Liu and Sussmann (see~\cite{LS95} or~\cite[Chapter 3]{Mon02}), consists of a slight modification of the Martinet example to make the curve $\alpha$ in~\eqref{eq:alfa} strictly abnormal. The choice of $\varphi$ and $\psi$ here is $\varphi=1-x_1$ and $\psi=x_1^2$. Consequently, the defining function of $\Sigma_{\mathcal{D}}$ in~\eqref{eq:martinet_surf} becomes
\begin{equation}
	2x_1(1-x_1)+x_1^2=x_1(2-x_1),
\end{equation}
implying that the Martinet surface is the union of the two planes $x_1=0$ and $x_1=2$.

The curve $\a_\e$ defined in~\eqref{eq:alfa} remains abnormal and, since $\varphi$ is no longer equal to 1, the sub-Riemannian length of horizontal curves here is no longer the Euclidean length of their projection onto $\R^2$. Therefore, the length-minimality of $\a_\e$ is no longer obvious in this example. In~\cite{LS95} (see also~\cite[Chapter 3]{Mon02} or~\cite[Section 2.5]{Rif14}) it is proved that $\a_\e$ is both strictly abnormal and length-minimizing for small $\e>0$. 

The proof of Liu and Sussmann relies on the fact that the curve $\a_\e$ is $C^1$-rigid.

\begin{definition}
	\label{def:C1-rigid}
	Let $\eta:[0,\tau]\to\R^3$ be a horizontal curve with control $u$. For $\de>0$, define 
	\begin{equation}
		\begin{split}
			\mathcal U_{\eta,\de}&:=\big\{\tilde\eta:[0,\tau]\to\R^3 \text{ horizontal with control } \tilde u \mid \\ &\qquad\;\tilde\eta(0)=\eta(0), \tilde\eta(\tau)=\eta(\tau), \text{ and }
			\|\eta-\tilde\eta\|_\oo, \|u-\tilde u\|_\oo<\de\big\}.
		\end{split}
	\end{equation}
	We say that $\eta$ is {\em $C^1$-rigid} if there is $\de>0$ such that every curve in $\mathcal U_{\eta,\de}$ is a reparameterization of $\eta$.
\end{definition}

We notice that Definition \ref{def:C1-rigid} does not depend on the parameterization of $\eta$. Indeed, if $\varphi:[0,\tau']\to[0,\tau]$ is a reparameterization of $\eta$, then for every $\de>0$ and every $\tilde \eta \in\mc U_{\eta\circ\varphi,\de}$, we have that $\tilde\eta\circ\varphi^{-1}\in \mc U_{\eta,\,\de\max\{1,\|\dot\varphi^{-1}\|_\oo\}}$.

\begin{remark}
	\label{C1iso}
	The $C^1$-rigidity of $\a_\e$ is provided by the squared power appearing on $\psi=x_1^2$, see \cite[Chapter 3]{Mon02}. As explained at the end of this section, the property of being $C^1$-rigid is not enough to be length-minimizing.
\end{remark}

\begin{remark}
	The curve $\a_\e$ remains strictly abnormal by choosing $\psi=x_1^k$ for every integer $k\geq2$. However, it remains $C^1$-rigid only for even $k$ and, in fact, the proof of Liu and Sussmann showing the minimality of $\a_\e$ can be repeated only if $k$ is even. Instead, for odd $k$, the curve $\a_\e$ is not of minimal length, see~\cite[Theorem 10.1]{BMS24}.
\end{remark}

\medskip

\noindent{\bf The examples defined in \eqref{eq:horvf}-\eqref{eq:P}.}
Our examples arise as a different modification of the Martinet example than the one as in the Liu-Sussmann case. The aim is to produce a singularity on the Martinet surface, then generating non-smooth and possibly lenght-minimizing abnormal curves.

We choose $\varphi=1$ and $\psi=P(x)^2$, where $P(x)=x_1^a-x_2^b$ and $a,b\in\N\setminus\{0\}$. The choice of $\varphi=1$ ensures, according to~\eqref{eq:martinet_surf}, that possible singularities on $\Sigma_\D$ only depend on $\psi$. With respect to the Liu-Sussmann example, we also recover the property that the sub-Riemannian length of horizontal curves is the Euclidean length of their projections onto $\R^2$, see Remark~\ref{rem:eucl}.

The choice of $\psi=P^2$ is then thought of as a perturbation of the square $(x_1^a)^2$, generating the desired singularity. The squared power on $P$ aims to maintain a similar analytic structure as in the Liu-Sussmann example and its importance is better explained in Remarks~\ref{rem:P^2_1} and~\ref{rem:P^2_2} below.

By~\eqref{eq:martinet_surf}, the Martinet surface is given by the zero locus of the derivative of $P^2$ with respect to $x_1$. We define the abnormal polynomial
\begin{equation}
	\label{eq:A}
	Q(x_1,x_2):=\d_{x_1}\big(P(x_1,x_2)^2\big)=2ax_1^{a-1}P(x_1,x_2),
\end{equation}
and so the Martinet surface is
\begin{equation}
	\label{eq:mart_ex}
	\Sigma_\D=\{x\in\R^3\mid Q(x_1,x_2)=0\}.
\end{equation}
If $a=1$ we have $\Sigma_\D=\{x\in\R^3\mid P=0\}$, while for $a\geq2$ we have
\begin{equation}
	\label{eq:mart_ex2}
	\Sigma_\D=\{x\in\R^3\mid x_1=0\}\cup\{x\in\R^3\mid P(x_1,x_2)=0\}.
\end{equation}

\begin{remark}
	\label{rem:P^2_1}
	Thanks to the squared power on $P$, the zero locus of $P$ is part of the Martinet surface for every $a\geq1$. Therefore, the curve $\g_\e=\g_\e^{+,+}$ defined in \eqref{eq:gammaintro} is abnormal in $(\R^3,\D)$, for every $\e>0$ and for every $a$ and $b$. Moreover, the curve $\g_\e$ is strictly abnormal in $(\R^3,\D,g)$ for $a\neq b$, as showed in the next lemma. When $a$ and $b$ are not multiples $\g_\e$ is not smooth at 0, motivating the interest in these examples.
\end{remark}

\begin{lemma}
	\label{lem:str_abn}
	Let $a,b\in\N\setminus\{0\}$ with $a\neq b$. Then, no parameterization of the curve $\g_\e$ defined in~\eqref{eq:gammaintro} is normal in $(\R^3,\D,g)$, for any $\e>0$.
\end{lemma}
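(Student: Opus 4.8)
The plan is to argue by contradiction: suppose some parameterization of $\g_\e$ is normal, hence solves the Hamiltonian system \eqref{eq:Hameqs} with some covector $p=(p_1,p_2,p_3)$. Since we are free to choose the arc-length parameterization, I will work with $\g_\e$ as given in \eqref{eq:gammaintro}; say $b\geq a$, so that (up to reparameterization) $\g_\e(t)=(t^q,t,0)$ with $q=b/a\geq1$. The curve lies in the zero locus of $P$, so $P(\g_\e(t))\equiv0$, and consequently $\psi=P^2$ and all its first derivatives $\psi_{x_1},\psi_{x_2},\psi_{x_3}$ vanish along $\g_\e$; also $\varphi\equiv1$ so $\varphi_{x_i}\equiv0$. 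Plugging this into the covector equations in \eqref{eq:Hameqs} gives $\dot p_1=0$, $\dot p_2=0$, $\dot p_3=0$, so $p$ is a constant covector. Meanwhile $h(x,p)=p_2\varphi+p_3\psi=p_2$ along $\g_\e$ (since $\psi=0$ there), and the position equations read $\dot x_1=p_1$, $\dot x_2=p_2\cdot 1=p_2$, $\dot x_3=p_2\cdot 0=0$.

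The contradiction then comes from the shape of $\g_\e$. From $\dot x_2=p_2$ constant we get $\g_{\e,2}(t)=p_2 t + c$, which is consistent with $\g_{\e,2}(t)=t$ (forcing $p_2=1$, $c=0$). But from $\dot x_1=p_1$ constant we would need $\g_{\e,1}(t)=t^q$ to be an affine function of $t$, which holds only if $q=1$, i.e. $a=b$ — excluded by hypothesis. (If $q$ is not an integer one can instead observe $\g_\e$ is not even $C^2$, contradicting smoothness of normal extremals; but the affine-coordinate argument is cleaner and uniform.) One subtlety: the Hamiltonian system is written for a fixed parameterization, whereas the statement quantifies over all parameterizations. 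To handle this, I would note that normality is preserved under reparameterization in the following sense: a normal extremal, reparameterized, remains a solution of \eqref{eq:Hameqs} up to a constant rescaling of time and covector (the Hamiltonian flow with $H$ as in \eqref{eq:H} being homogeneous of degree $2$ in $p$), so it suffices to derive a contradiction for one convenient parameterization; alternatively, use directly that normal extremals are smooth in arc-length and that the arc-length reparameterization of $\g_\e$ is non-smooth at $0$ when $a\neq b$ are not multiples, and for the remaining case ($a\neq b$ but $a\mid b$ or $b\mid a$) run the affine-coordinate argument above.

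The main obstacle, and the only place requiring care, is precisely this parameterization issue: making sure that "no parameterization is normal" really does reduce to checking a single parameterization. Once that reduction is in place the computation is immediate, since the vanishing of $\psi$ and all its derivatives along the zero locus of $P$ collapses the Hamiltonian system to the trivial one $\dot p\equiv0$, $\dot x=(p_1,p_2,0)$, whose solutions are lines in $\R^3$ — and $\g_\e$ is a line only when $a=b$. I would also remark that this is exactly the mechanism by which the squared power on $P$ matters: it is what forces $\psi_{x_2}$ (and $\psi_{x_1},\psi_{x_3}$) to vanish on $\{P=0\}$, and hence kills the $p$-equations; this ties back to Remark \ref{rem:P^2_1} and foreshadows Remarks \ref{rem:P^2_1}–\ref{rem:P^2_2}.
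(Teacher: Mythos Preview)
Your core computation is correct and matches the paper's: along any parameterization of $\g_\e$ one has $P\equiv0$, so $\psi=P^2$ and $\nabla\psi=2P\nabla P$ vanish, the covector equations collapse to $\dot p\equiv0$, and the position equations force $x_1,x_2$ to be affine in $t$.

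The only weak spot is your treatment of the parameterization issue. Your claim that ``a normal extremal, reparameterized, remains a solution of \eqref{eq:Hameqs} up to a constant rescaling of time and covector'' is not correct as stated: homogeneity of $H$ in $p$ only gives invariance under \emph{affine} time changes, not arbitrary ones. The paper sidesteps this entirely by never fixing the parameterization $(t^q,t,0)$. It starts instead with an \emph{arbitrary} normal parameterization $\bar\g:[0,1]\to\R^3$ of $\g_\e$; the identity $P(\bar\g(t))=0$ holds automatically because the image of $\bar\g$ lies in $\{P=0\}$, so the same collapse of \eqref{eq:Hameqs} gives $\bar\g_1(t)=p_1(0)t$, $\bar\g_2(t)=p_2(0)t$. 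Substituting back into $P(\bar\g)=0$ yields $p_1(0)^a t^a-p_2(0)^b t^b=0$ for all $t$, which for $a\neq b$ forces $p_1(0)=p_2(0)=0$ and hence $\bar\g$ constant --- a contradiction. This is exactly your affine-coordinate argument, but run for a generic parameterization, so no reduction step is needed. I recommend you rewrite your proof that way; it is shorter and removes the only genuine gap.
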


\begin{proof}
	Assume by contradiction that there exists a parameterization $\bar\g:[0,1]\to\R^3$ of $\g_\e$ and a function of covectors $p:[0,1]\to(\R^3)^*$ such that $\bar\g$ and $p$ are solution of~\eqref{eq:Hameqs}. Since $P(\bar\g(t))=0$ for all $t\in[0,1]$, the equations in~\eqref{eq:Hameqs} read
	\begin{equation}
		\begin{cases}
			\dot {\bar\g}_1=p_1\\
			\dot {\bar\g}_2=p_2\\
			\dot {\bar\g}_3=0,
		\end{cases}
		\begin{cases}
			\dot p_1=0\\
			\dot p_2=0\\
			\dot p_3=0.
		\end{cases}
	\end{equation} 
	Then both $p_1=p_1(0)$ and $p_2=p_2(0)$ are constants. Moreover, we have $\bar\g_1=p_1(0)t$ and $\bar\g_2=p_2(0)t$. Plugging these identities into $P(\bar\g)=0$ we get
	\begin{equation}
		\label{eq:non-normal}
		p_1(0)^at^a - p_2(0)^bt^b=0, \quad \text{for all } t\in[0,1],
	\end{equation}
	which is not possible for $a\neq b$, unless $p(0)=0$.
\end{proof}

The second crucial role of the squared power on $P$ is contained in the next lemma and explained in Remark \ref{rem:P^2_2} below.

\begin{lemma}
	\label{lem:C1-rigid}
	For all $b>a$ and $\e>0$, the curve $\g_\e$ defined in \eqref{eq:gammaintro} is $C^1$-rigid.
\end{lemma}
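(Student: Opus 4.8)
The plan is to show that the curve $\g_\e$ is $C^1$-rigid by exploiting the squared power on $\psi=P^2$, following the scheme of the Liu--Sussmann argument described in Remark~\ref{C1iso}. Fix $b>a$, so that (in the case $b>a$) the curve is $\g_\e(t)=(t^q,t,0)$ with $q=b/a>1$, lying in the component $\{P=0\}=\{x_1^a=x_2^b\}$ of the Martinet surface $\Sigma_\D$. Let $\tilde\eta:[0,\e]\to\R^3$ be a horizontal curve with control $\tilde u=(\tilde u_1,\tilde u_2)$, with the same endpoints as $\g_\e$, and suppose $\|\g_\e-\tilde\eta\|_\oo<\de$ and $\|u-\tilde u\|_\oo<\de$, where $u=(\dot{\g_\e}{}_1,\dot{\g_\e}{}_2)$ is the control of $\g_\e$ (recall $\varphi\equiv1$, so by Remark~\ref{rem:eucl} controls are just the derivatives of the first two coordinates, and sub-Riemannian length is Euclidean length of the planar projection). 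The goal is to force, for $\de$ small enough, that $\tilde\eta$ is a reparameterization of $\g_\e$, i.e.\ that its projection $(\tilde\eta_1,\tilde\eta_2)$ stays inside the curve $\{x_1^a=x_2^b\}$.

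First I would write down the constraint coming from the endpoints of the third coordinate. Since $\g_\e(0)=\tilde\eta(0)$ and $\g_\e(\e)=\tilde\eta(\e)$ have third coordinate $0$, and since $\eta_3(\tau)=\eta_3(0)+\int_0^\tau \dot\eta_2(s)\,\psi(\eta(s))\,ds$ (this is~\eqref{eq:3coord} with $\varphi\equiv1$), we get the closed-loop integral identity
\begin{equation}
	\label{eq:C1rigid-constraint}
	\int_0^\e \dot{\tilde\eta}_2(s)\, P(\tilde\eta_1(s),\tilde\eta_2(s))^2\, ds = 0.
\end{equation}
The integrand is pointwise nonnegative. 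Hence~\eqref{eq:C1rigid-constraint} forces $\dot{\tilde\eta}_2(s)\,P(\tilde\eta(s))^2=0$ for a.e.\ $s$; equivalently, on the set where $\dot{\tilde\eta}_2\neq0$ we must have $P(\tilde\eta(s))=0$, i.e.\ $\tilde\eta$ lies in the Martinet surface wherever it actually moves in the $x_2$-direction. This is exactly the mechanism by which the \emph{square} on $P$ is used: without it the integrand could change sign and the conclusion would fail (cf.\ the odd-$k$ remark).

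The second, and main, step is to upgrade this to: the \emph{planar projection} of $\tilde\eta$ is a monotone reparameterization of the planar projection $\g\colon t\mapsto(t^q,t)$ of $\g_\e$. Here the $C^1$-closeness is essential. Because $\|u-\tilde u\|_\oo<\de$ and $\dot{\g_\e}{}_2\equiv1$, we have $\dot{\tilde\eta}_2(s)\geq 1-\de>0$ for a.e.\ $s$ once $\de<1$; so $\tilde\eta_2$ is strictly increasing, $\dot{\tilde\eta}_2\neq0$ everywhere, and therefore \eqref{eq:C1rigid-constraint} gives $P(\tilde\eta(s))=0$ for \emph{all} $s\in[0,\e]$. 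Thus the whole planar projection of $\tilde\eta$ is trapped on the curve $\{x_1^a=x_2^b\}$ (inside the small ball around the origin where, by $C^0$-closeness, $b>a$ forces this branch to be a graph $x_1=x_2^{q}$ over a neighborhood of $0$ in the $x_2$-axis — one must check that the $C^0$-neighborhood of $\g_\e$ meets $\Sigma_\D=\{x_1=0\}\cup\{P=0\}$ only in the relevant branch, using $b>a$ so the two branches separate away from the origin, and near the origin that $x_1=x_2^q$ is the unique solution with $x_1$ small). Reparameterizing by $x_2$, we conclude $\tilde\eta_1(s)=\tilde\eta_2(s)^{q}$ and then $\tilde\eta_3\equiv0$ by~\eqref{eq:3coord}, so $\tilde\eta$ is the reparameterization of $\g_\e$ by its second coordinate. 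This proves $\g_\e$ is $C^1$-rigid with the $\de$ above.

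The step I expect to be the genuine obstacle is the ``trapping'' argument: verifying that the $C^0$-neighborhood of $\g_\e$ intersected with $\Sigma_\D$ is precisely the single smooth branch $\{x_1=x_2^q\}$ (so that $P(\tilde\eta)\equiv0$ really does pin $\tilde\eta$ to $\g_\e$'s track and not to some other piece of the Martinet surface, e.g.\ the plane $\{x_1=0\}$ when $a\geq2$, or the other sign-branch of $x_1^a=x_2^b$ when $a$ is even). This requires the hypothesis $b>a$ and a careful local analysis of the zero set of $P$ near $0$ — choosing $\de$ small enough that the tube of radius $\de$ around $\g_\e([0,\e])$ avoids $\{x_1=0\}\cap\{x_2\neq0\}$ and any spurious branch. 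The remaining arithmetic — that~\eqref{eq:C1rigid-constraint} holds, that $\dot{\tilde\eta}_2>0$, and that a curve on $\{x_1=x_2^q\}$ with the right endpoints and in this tube is forced to be monotone in $x_2$ — is routine once this geometric picture is nailed down.
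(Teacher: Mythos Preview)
Your argument is essentially the paper's own proof: use $C^1$-closeness to force $\dot{\tilde\eta}_2>0$, then the endpoint constraint $\int_0^\e \dot{\tilde\eta}_2\, P(\tilde\eta)^2\,ds=0$ together with $P^2\ge0$ gives $P(\tilde\eta)\equiv0$, hence $\tilde\eta$ is a reparameterization of $\g_\e$. Two small cleanups: (i) your first paragraph asserts the integrand is nonnegative \emph{before} you have $\dot{\tilde\eta}_2>0$, so that step should simply be absorbed into the second one; (ii) the ``trapping'' worry is easier than you suggest---the constraint you derive is $P(\tilde\eta)=0$, not $\tilde\eta\in\Sigma_\D$, so the plane $\{x_1=0\}$ never enters, and for $a$ even the sign-branch ambiguity is settled by continuity of $\tilde\eta_1$, the fact that $\tilde\eta_1(t)^a=\tilde\eta_2(t)^b>0$ for $t>0$, and $\tilde\eta_1(\e)=\e^q>0$ (no $C^0$-tube argument is needed; the paper simply omits this routine step).
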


\begin{proof}
	We recall that $\g_\e:[0,\e]\to \R^3$ is defined by $\g_\e(t)=(t^q,t,0)$, $q=\frac ba$.
	
	Assume by contradiction that for all $\de>0$ there is $\tilde\eta\in\mc U_{\g_\e,\de}$ which is not a reparameterization of $\g_\e$. Denote by $\tilde u$ the control of $\tilde\eta$. Since the control of $\g_\e$ is $(qt^{q-1},1)$, $t\in[0,\e]$, we have that
	\begin{equation}
		|1-\tilde u_2(t)|<\de, \text{ for a.e. } t\in[0,\e].
	\end{equation}
	Fix $\de\in(0,\frac12]$. Then we have
	\begin{equation}
		\label{eq:control-C1rigid}
		\tilde u_2(t)>1-\de\geq \frac12, \text{ for a.e. } t\in[0,\e].
	\end{equation}
	
	On the other hand, since $\tilde \eta\in\mc U_{\g_\e,\de}$, we have that $\tilde\eta(0)=0$ and $\tilde\eta(\e)=(\e^q,\e,0)$. In particular, by \eqref{eq:3coord} and \eqref{eq:control-C1rigid}, we deduce
	\begin{equation}
		\label{eq:thanks_P2}
		0=\tilde\eta_3(\e)=\int_0^\e \tilde u_2(t) P\big(\tilde \eta_1(t),\tilde \eta_2(t)\big)^2dt\geq \frac12 \int_0^\e P\big(\tilde \eta_1(t),\tilde \eta_2(t)\big)^2dt.
	\end{equation}
	The last inequality implies $P\big(\tilde\eta_1(t),\tilde \eta_2(t)\big)=0$, for all $t\in[0,\e]$. Consequently, $\tilde\eta$ is a reparameterization of $\g_\e$, which is a contradiction.
\end{proof}

\begin{remark}
	\label{rem:P^2_2}
	Without an even power on $P$, it is not possible to conclude that $P(\tilde\eta_1,\tilde\eta_2)=0$ from \eqref{eq:thanks_P2}. Thus, the squared power on $P$ ensures the $C^1$-rigidity of $\g_\e$, a property that Montgomery highlighted as a potentially sufficient condition to provide length-minimality, see \cite[Chapter 3]{Mon02}. 
	Also, for $b>a\geq2$ and for small $\e>0$, the curve $\g_\e$ is close to the curve $\a_\e$ defined in \eqref{eq:alfa} in the $C^1$ topology, being the two curves tangent at the origin (when $a>b\geq2$, the two curves are instead orthogonal at 0). Since the Euclidean metric on $\R^2$ measures the length of horizontal curves, the curve $\a_\e$ is length-minimizing (as in the Martinet case, see Remark \ref{rem:eucl}). Definitely, the squared power on $P$ creates an analytic structure similar to the Martinet and the Liu-Sussmann ones, where the curve $\g_\e$ is, at least heuristically, a very good candidate to be lenght-minimizing.
\end{remark}

The proof of Theorem~\ref{thm:main2} that we give in the next section shows that the parameters $a$ and $b$ partially destroy the heuristic intuition behind the construction of these examples. Their role seems to be much deeper than the analytic one they carry out in the computations. Also, Theorem \ref{thm:main2} and Lemma \ref{lem:C1-rigid} prove Corollary \ref{cor:C1rigid-nonmin}, showing that the property of being $C^1$-rigid is not enough to deduce that a curve is length-minimizing.

\section{Proof of Theorem~\ref{thm:main2}}

\label{sec:pfthm2}

The proof relies upon the construction of a shorter competitor. We consider the sub-Riemannian manifold $(\R^3,\D,g)$ and the strictly abnormal curve $\g_\e=\g_\e^{+,+}$, $\e>0$, defined in \eqref{eq:horvf}, \eqref{eq:P}, and \eqref{eq:gammaintro}. The proof that also $\g_\e^{+,-}$, $\g_\e^{-,+}$, and $\g_\e^{-,-}$ are not length-minimizing is identical. 

From now on, $\e>0$ is fixed. We also introduce the useful notation
\begin{equation}
	q:=\frac{\max\{a,b\}}{\min\{a,b\}}.
\end{equation}

We prove the theorem for $b>a$, thus we need to prove that $\g_\e$ is not of minimal length for $b>a\geq3$, or for $a=2$ and $b=3$, and for every $\e>0$. Here we have $q=\frac ba$. Then, in Remark \ref{rem:a>b}, we explain how to adapt the proof to the case $a>b$ (the two cases are essentially identical).

\medskip

The starting point to prove Theorem~\ref{thm:main2} is to substitute competing curves for $\g_\e$, i.e., horizontal curves having the same end-points of $\g_\e$, with plane curves satisfying a certain constraint. The plane curve is the projection onto $\R^2$ of the competitor and the constraint to which it is subjected is the one in \eqref{eq:3coord} defining the third coordinate. 

\begin{definition}
	\label{def:plane_comp}
	For an absolutely continuous curve $\w:[0,\tau]\to\R^2$, we define the integral function $I_\w:[0,\tau]\to\R$,
	\begin{equation}
		\label{eq:dotomega3}
		I_\w(t):=\int_{0}^{t} \dot\w_2(s)P(\w(s))^2ds.
	\end{equation}
	We say that $\w$ is a {\em competing curve for $\g_\e$}, or simply a {\em competitor}, if $\w(0)=0$, $\w(\tau)=(\e^q,\e)$, and $I_\w(\tau)=0$.	
\end{definition}

In the following, we denote by $\mc C_\e([0,\tau],\R^2)$ the set of competitors parameterized in the interval $[0,\tau]$. For a competitor $\w\in \mc C_\e([0,\tau],\R^2)$, we call the curve $(\w_1,\w_2,I_\w):[0,\tau]\to\R^3$ the horizontal lift of $\w$. Also, with abuse of notation, we denote by $\g_\e$ the projection on $\R^2$ of $\g_\e$ itself, according to the fact that $\g_3$ is constantly 0.

\medskip

\newcommand{\ind}{{\mathrm{ind}}}

We next observe a consequence of Stokes theorem, providing that for all $\w\in \mc C_\e([0,\tau],\R^2)$ the condition $I_\w(\tau)=0$ in Definition~\ref{def:plane_comp} is a constraint of isoperimetric nature, involving some weighted area.

Let $\vartheta_i:[0,\tau_i]\to\R^2$, $\tau_i>0$, $i=1,2$, be two absolutely continuous plane curves. If $\vartheta_1(\tau_1)=\vartheta_2(0)$, we say that $\vartheta_1$ and $\vartheta_2$ are {\em concatenable}, and we define their concatenation as
\begin{equation}
	(\vartheta_1*\vartheta_2)(t):=
	\begin{cases}
		\vartheta(t), \quad &t\in[0,\tau_1],\\
		\vartheta_2(t-\tau_1), \quad &t\in[\tau_1,\tau_1+\tau_2].
	\end{cases} 
\end{equation}
We also define the {\em inverse parameterization} of $\vartheta_1$ as
\begin{equation}
	-\vartheta_1(t):=\vartheta_1(\tau_1-t).
\end{equation}

If $\w\in \mc C_\e([0,\tau],\R^2)$, we denote by $\Omega=\Omega(\w,\e)  \subset \mathbb R^2$ the bounded open subset of $\mathbb R^2$ enclosed
by $\w*(-\g_\e)$. For each connected component $U\subset \Omega$, we denote by $\mathrm{ind}(U)\in\mathbb Z$ its winding number with respect to the 
curve $\w*(-\g_\e)$. Then we define the weighted area of $U$ as 
\begin{equation}
	\label{eq:areapesata}
	\mathscr L_Q(U):=\ind(U)\iint_U Q(x_1,x_2)dx_1dx_2,
\end{equation}
where $Q$ is the abnormal polynomial in~\eqref{eq:A}.

\begin{lemma}
	\label{lem:Stokes}
	For any $\omega \in \mathcal C_\e([0,\tau],\mathbb R^2)$ we have 
	\begin{equation}
		\label{eq:Stokes}
		\sum_{U} \mathscr L_Q(U)=0,
	\end{equation} 
	where the sum is over all connected components $U\subset\Omega$.
\end{lemma}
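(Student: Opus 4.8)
The plan is to apply Green's theorem (the planar Stokes theorem) to the closed curve $\w*(-\g_\e)$ together with the $1$-form whose exterior derivative is $Q(x_1,x_2)\,dx_1\wedge dx_2$. First I would choose a primitive: since $Q(x_1,x_2)=\d_{x_1}\big(P(x_1,x_2)^2\big)$, the $1$-form $\beta := P(x_1,x_2)^2\,dx_2$ satisfies $d\beta = Q(x_1,x_2)\,dx_1\wedge dx_2$. Then, for the closed (in general non-simple) curve $\Gamma := \w*(-\g_\e):[0,\tau+\e]\to\R^2$, the classical Stokes/Green formula for curves with multiplicity gives
\begin{equation}
	\oint_{\Gamma} \beta = \sum_U \ind(U)\iint_U Q(x_1,x_2)\,dx_1\,dx_2 = \sum_U \mathscr L_Q(U),
\end{equation}
where the sum is over the connected components $U$ of the complement of the image of $\Gamma$ that lie in the bounded region $\Omega$ (the unbounded component has winding number $0$ and contributes nothing, and components with winding number $0$ likewise drop out). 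So it remains to show that the left-hand side $\oint_\Gamma \beta$ vanishes.

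\textbf{Computing the line integral.} By additivity of the line integral over the concatenation, $\oint_\Gamma \beta = \int_\w \beta + \int_{-\g_\e}\beta = \int_\w \beta - \int_{\g_\e}\beta$. Along any absolutely continuous curve $\vartheta=(\vartheta_1,\vartheta_2):[0,T]\to\R^2$ we have, by definition of the line integral of $\beta = P^2\,dx_2$,
\begin{equation}
	\int_\vartheta \beta = \int_0^T \dot\vartheta_2(s)\,P(\vartheta(s))^2\,ds.
\end{equation}
For $\vartheta=\w$ this is exactly $I_\w(\tau)$, which equals $0$ because $\w\in\mc C_\e([0,\tau],\R^2)$ is a competitor (Definition~\ref{def:plane_comp}). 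For $\vartheta=\g_\e$, recall that $\g_\e$ is supported in the zero locus of $P$ (indeed $P(\g_\e(t))=P(t^q,t)=t^b - t^b = 0$ since $q=b/a$ forces $(t^q)^a = t^b$; cf.\ Remark~\ref{rem:P^2_1}), hence $P(\g_\e(s))^2\equiv 0$ and $\int_{\g_\e}\beta = 0$. Therefore $\oint_\Gamma\beta = 0 - 0 = 0$, which combined with the Stokes identity above yields $\sum_U \mathscr L_Q(U)=0$, as claimed.

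\textbf{Main obstacle.} The genuinely delicate point is the version of Green's theorem being invoked: $\Gamma=\w*(-\g_\e)$ is only absolutely continuous and need not be simple, so one cannot directly cite the Jordan-curve form of Green's theorem. The rigorous statement I would use is the ``winding number'' form of Stokes' theorem for rectifiable (or merely absolutely continuous, finite-length) closed curves in the plane: for a smooth $1$-form $\beta$ on a neighborhood of the image of $\Gamma$, $\oint_\Gamma\beta = \iint_{\R^2} \ind_\Gamma(x)\,d\beta$, where $\ind_\Gamma$ is the integer-valued winding-number function, constant on each connected component of the complement of $\mathrm{Im}(\Gamma)$ and zero on the unbounded one. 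This is standard (it follows, e.g., from the smooth case by approximation, or from degree theory / the coarea-type argument), and it is exactly what the notation $\mathscr L_Q(U)$ in~\eqref{eq:areapesata} is set up to encode. One should also note the curves are horizontal hence absolutely continuous of finite length, so all integrals converge and the approximation argument applies; with that in hand the proof is complete.
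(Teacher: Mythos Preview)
Your proof is correct and follows essentially the same approach as the paper: both identify the $1$-form $P^2\,dx_2$, observe that its line integral along $\w$ equals $I_\w(\tau)=0$ and along $\g_\e$ vanishes because $P(\g_\e)\equiv0$, and then invoke Stokes' theorem for the closed curve $\w*(-\g_\e)$. Your discussion of the winding-number form of Green's theorem for non-simple absolutely continuous curves is in fact more explicit than the paper's, which simply says ``The statement then follows by Stokes Theorem.''
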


\begin{proof}
	Since $\w\in \mc C_\e([0,\tau],\mathbb R^2)$ we have $I_\w(\tau)=0$ (see Definition~\ref{def:plane_comp}). By~\eqref{eq:dotomega3} this condition reads
	\begin{equation}
		0=\int_0^\tau \dot\w_2(t) P(\w(t))^2dt=\int_{\w}P(x_1,x_2)^2dx_2.
	\end{equation}
	Since $P(\g_\e)$ is identically 0, from the last equation we deduce 
	\begin{equation}
		0=\int_{\w}P(x_1,x_2)^2dx_2+\int_{-\g_\e}P(x_1,x_2)^2dx_2=\int_{\w*(-\g_\e)}P(x_1,x_2)^2dx_2.
	\end{equation}
	The statement then follows by Stokes Theorem.
\end{proof}

We are now ready to construct the competitor and to prove that it is shorter than $\g_\e$ if $b>a\geq3$, or if $a=2$ and $b=3$ (we recall that we are proving the theorem when $b>a$ and that we explain how to adapt the proof to the case $a>b$ in Remark~\ref{rem:a>b}). 

The competitor is built in the following way (see also the figure below):
\begin{itemize}
	\item[i)] we cut $\g_\e$ with a segment near the origin. For $0<\rho<\e$, we define the cutting segment $\k_\rho(t):=(t\rho^{q-1},t)$, $t\in[0,\rho]$, where $q=\frac ba$ and the parameter $\rho$ will be chosen later;
	\item[ii)] we continue from the point $(\rho^q,\rho)$ by following $\g_\e$ until its end point, i.e., we run across the curve $\g_\e|_{[\rho,\e]}$;
	\item[iii)] finally, we add a small rectangle at the end-point and we run across its boundary, namely, for $0<\de<\e$, we let $E_\de:=(\e^q,\e)+[0,\e^q\de]\times[-\e\de,0]$ and we let $\s=\s_\de:[0,2(\e+\e^q)\de]\to\R^2$ to be the parameterization of $\d E_\de$ given by
	\begin{equation}
		\s_\de(t):=(\e^q,\e)+
		\begin{cases}
			\s_\de(t)= (t,0), \quad &t\in[0,\e^q\de],\\
			\s_\de(t)= (\e^q\de,\e^q\de-t), \quad &t\in[\e^q\de,(\e+\e^q)\de],\\
			\s_\de(t)= ((\e+2\e^q)\de-t,-\de\e), \quad &t\in[(\e+\e^q)\de,(\e+2\e^q)\de],\\
			\s_\de(t)= (0,t-2(\e+\e^q)\de), \quad & t\in[(\e+2\e^q)\de,2(\e+\e^q)\de].
		\end{cases}
	\end{equation} 
\end{itemize}

\begin{figure*}[ht]
\centering
\begin{tikzpicture}

\def\eps{0.9}
\def\rval{0.45}
\def\qval{2}
\def\dval{0.3}

\begin{axis}[
    axis lines=middle,
    xmin=-0.02, xmax=1.2,
    ymin=-0.02, ymax=1.2,
    xlabel={$x_1$},
    ylabel={$x_2$},
    ticks=none,
    width=9cm,
    height=9cm,
    clip=false
]

\addplot[
    thick,
    black,
    domain=0:1.1,
    samples=200,
]
({x^\qval},{x});

\node[black] at (axis cs:0.4,0.7) {$\gamma$};

\addplot[
    thick,
    red,
    postaction={
        decorate,
        decoration={
            markings,
            mark=at position 0.5 with {\arrow{>}}
        }
    }
]
coordinates {(0,0) ({\rval^\qval},{\rval})};

\node[red,left] at (axis cs:0.23,0.20) {$\kappa_\rho$};

\addplot[only marks,mark=*]
coordinates {({\rval^\qval},{\rval})};

\node[above left]
at (axis cs:{\rval^\qval+0.25},{\rval-0.05})
{$(\rho^q,\rho)$};


\addplot[only marks,mark=*]
coordinates {({\eps^\qval},{\eps})};

\node[above right]
at (axis cs:{\eps^\qval-0.2},{\eps})
{$(\varepsilon^q,\varepsilon)$};

\addplot[
    thick,
    blue,
    postaction={
        decorate,
        decoration={
            markings,
            mark=at position 0.125 with {\arrow{>}},
            mark=at position 0.375 with {\arrow{>}},
            mark=at position 0.625 with {\arrow{>}},
            mark=at position 0.875 with {\arrow{>}}
        }
    }
]
coordinates {
    ({\eps^\qval},{\eps})
    ({\eps^\qval*(1+\dval*0.7)},{\eps})
    ({\eps^\qval*(1+\dval*0.7)},{\eps*(1-\dval))})
    ({\eps^\qval},{\eps*(1-\dval)})
    ({\eps^\qval},{\eps})
};

\node[blue,right]
at (axis cs:{\eps^\qval+\eps^\qval*\dval-0.05},{\eps-0.5*\eps*\dval})
{$\sigma_\delta$};

\end{axis}
\end{tikzpicture}
\end{figure*}

\begin{definition}
For $0<\rho,\de<\e$ we define the family of absolutely continuous plane curves $\w_{\rho,\de}$ as the concatenation
\begin{equation}
	\label{eq:etard}
	\w_{\rho,\de}:=\k_\rho * \g|_{[\rho,\e]} * \s_\de.
\end{equation}
\end{definition}

The curves defined in~\eqref{eq:etard} are not competitors a priori: the cut made with the curve $\k_\rho$ modifies the third coordinate of the horizontal lift of $\w_{\rho,\de}$, then we add the rectangle to correct this error, restoring the end-point of the third coordinate to 0. 

\begin{remark}
	\label{rem:win_num}
	The region enclosed by the curve $\w_{\rho,\de}*(-\g_\e)$ has two connected components, which are the rectangle $E_\de$ and the region enclosed by $\k_\rho * (-\g_\rho)$, which we call $U_{\rho}$. The winding numbers of $U_\rho$ and $E_\de$ have absolute value equal to 1, and we also have $Q|_{U_\rho}, Q|_{E_\de}\geq0$. Then the parameterization $\s_\de$ of $\d E_\de$ has been chosen in such a way that $\ind(E_\de)=-\ind(U_\rho)$, so that~\eqref{eq:Stokes} can be satisfied and $\w_{\rho,\de}$ may be a competitor.
\end{remark}

We have to chose suitable $\rho=\rho(\e)<\e$ and $\de=\de(\e)<\e$ (if it is possible) such that the curve $\w_{\rho,\de}$ both satisfies $I_{\w_{\rho,\de}}(\tau)=0$ and is shorter than $\g_\e$.

In the following, we omit the dependence on $\e$ in $\rho,\de$, and we omit the dependence on $\rho,\de$ in $\w,\k,\s, E, U$.

\begin{lemma}
	\label{lem:correction}
	For every $\e>0$ and $\rho,\de\in(0,\e)$ small enough there exist a constant $C=C(a,b,\e)>0$ and a function $f_{a,b}(\de)$, with $f_{a,b}(\de)=O(\de)$ as $\de\to0$, such that the curve $\w_{\rho,\de}$ is a competitor as soon as
	\begin{equation}
		\label{eq:correction}
		\rho^{2b+1}=C\de^3(1+f_{a,b}(\de)).
	\end{equation}
\end{lemma}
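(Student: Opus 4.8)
The goal is to determine, as $\de \to 0$, how large the cutting parameter $\rho$ must be so that the weighted-area constraint $\sum_U \mathscr L_Q(U) = 0$ of Lemma~\ref{lem:Stokes} is satisfied by $\w_{\rho,\de}$. By Remark~\ref{rem:win_num} there are exactly two connected components, $U_\rho$ (enclosed by $\k_\rho * (-\g_\rho)$) and $E_\de$, with opposite winding numbers, so the constraint $I_{\w_{\rho,\de}}(\tau)=0$ is equivalent to
\begin{equation}
	\iint_{U_\rho} Q(x_1,x_2)\,dx_1dx_2 = \iint_{E_\de} Q(x_1,x_2)\,dx_1dx_2.
\end{equation}
The plan is therefore to estimate both integrals asymptotically and solve for $\rho$ in terms of $\de$.

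\textbf{Right-hand side (the rectangle).} On $E_\de = (\e^q,\e) + [0,\e^q\de]\times[-\e\de,0]$, the integrand $Q(x_1,x_2) = 2ax_1^{a-1}P(x_1,x_2) = 2ax_1^{a-1}(x_1^a - x_2^b)$ is a fixed smooth function evaluated near the fixed point $(\e^q,\e)$, where $P(\e^q,\e) = \e^{aq}-\e^b = \e^b - \e^b = 0$ but $x_1^{a-1}$ and $\d_{x_1}P$ are nonzero. Since the rectangle has dimensions $\e^q\de \times \e\de$ and area $\e^{q+1}\de^2$, a first-order Taylor expansion of $Q$ about $(\e^q,\e)$ gives
\begin{equation}
	\iint_{E_\de} Q\,dx_1dx_2 = c_1(a,b,\e)\,\de^3\big(1 + O(\de)\big)
\end{equation}
for an explicit constant $c_1 > 0$: indeed $Q$ vanishes at the corner $(\e^q,\e)$, its linear part over the rectangle integrates to a multiple of $\de^3$, and one checks the sign of $c_1$ is positive (consistent with $Q|_{E_\de}\geq 0$ in Remark~\ref{rem:win_num}). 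The error term collects the quadratic remainder of $Q$ and contributes $O(\de^4)$, hence $f_{a,b}$ of order $\de$.

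\textbf{Left-hand side (the region near the origin).} The region $U_\rho$ is bounded by the segment $\k_\rho(t) = (t\rho^{q-1},t)$, $t\in[0,\rho]$, and the arc $\g_\rho(t) = (t^q,t)$, $t\in[0,\rho]$, which both run from $0$ to $(\rho^q,\rho)$. For fixed $x_2 = t \in [0,\rho]$ the region spans $x_1$ from $t^q$ (on $\g$) to $t\rho^{q-1}$ (on $\k$); since $q = b/a > 1$ and $t \le \rho$ one has $t^q \le t\rho^{q-1}$, so the region lies in $\{0 \le x_1 \lesssim \rho^q, 0\le x_2\le\rho\}$. On this region $x_1^{a-1} \sim \rho^{q(a-1)}$ and $P = x_1^a - x_2^b$; since $x_1^a \lesssim \rho^{qa} = \rho^b$ and $x_2^b \le \rho^b$, the dominant contribution to $P$ comes from $-x_2^b$, so $Q(x_1,x_2) = 2ax_1^{a-1}(x_1^a - x_2^b)$ behaves like $-2a x_1^{a-1} x_2^b$ to leading order. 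Substituting $x_1 = \xi \rho^q$, $x_2 = \xi' \rho$ (rescaling to the unit region bounded by $\xi = \xi'^q$ below) shows
\begin{equation}
	\iint_{U_\rho} Q\,dx_1dx_2 = -2a\,\rho^{q(a-1)}\cdot\rho^{b}\cdot\rho^{q+1}\iint_{\{\xi'^q\le\xi\le\xi'\}} \xi^{a-1}\xi'^{b}\,d\xi\,d\xi' \cdot\big(1 + o(1)\big),
\end{equation}
and since $q(a-1) + b + q + 1 = qa - q + b + q + 1 = 2b + 1$, the exponent of $\rho$ is exactly $2b+1$. One must check the constant $c_0(a,b) := -2a\iint_{\{\xi'^q\le\xi\le\xi'\}}\xi^{a-1}\xi'^b\,d\xi\,d\xi'$ is nonzero (it is a negative number of modulus a ratio of Beta-type integrals) and carries the right sign so that, after accounting for orientations (the winding numbers of $U_\rho$ and $E_\de$ are opposite, and $Q$ has a definite sign on each piece as noted in Remark~\ref{rem:win_num}), the two sides can indeed be balanced. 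The subleading terms — coming from the $x_1^a$ part of $P$ and from the fact that $\k_\rho$ is only an approximation of the relevant boundary — are of strictly higher order in $\rho$, i.e.\ $o(\rho^{2b+1})$, hence absorbed into the $o(1)$.

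\textbf{Solving for $\rho$.} Equating the two expansions gives $|c_0(a,b)|\rho^{2b+1}(1+o(1)) = c_1(a,b,\e)\de^3(1+O(\de))$, which rearranges to
\begin{equation}
	\rho^{2b+1} = C\de^3\big(1 + f_{a,b}(\de)\big), \quad C := \frac{c_1(a,b,\e)}{|c_0(a,b)|} > 0,
\end{equation}
with $f_{a,b}(\de) = O(\de)$, as claimed; for each small $\de$ this defines $\rho = \rho(\de) = (C\de^3(1+f_{a,b}(\de)))^{1/(2b+1)}$, which is $O(\de^{3/(2b+1)})$ and hence $< \e$ for $\de$ small, so $\w_{\rho,\de}$ is a genuine competitor.

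\textbf{Main obstacle.} The routine part is the Taylor expansion over the rectangle; the delicate part is the asymptotics of $\iint_{U_\rho}Q$. One must (i) justify that the boundary piece contributing to $U_\rho$ is well-modelled by $\k_\rho$ and $\g_\rho$ — in particular pin down that the rescaled domain is exactly $\{\xi'^q \le \xi \le \xi'\}$ (this is where $q>1$ and the specific slope $\rho^{q-1}$ of $\k_\rho$ are used) — and (ii) verify that the leading balance has the correct \emph{sign}, i.e.\ that $\ind(E_\de) = -\ind(U_\rho)$ together with the signs of $Q$ on the two regions makes the constraint solvable rather than over-determined; this is precisely the content of the orientation choice in Remark~\ref{rem:win_num}. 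Getting the exponent $2b+1$ is then just the bookkeeping identity $q(a-1)+b+q+1 = 2b+1$, but it is the crux of why the theorem's dichotomy in $a,b$ will later emerge: this exponent governs the competition $\De L(\rho) > \de$ in the subsequent lemmas.
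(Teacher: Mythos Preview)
Your overall strategy is right and close to the paper's: compute the $Q$--weighted area of the two components and balance them. The paper, however, handles the cut side differently and more cleanly: rather than computing $\iint_{U_\rho}Q$, it evaluates the line integral $\De_3^{\cut}(\rho)=\int_0^\rho P(\k(t))^2\dot\k_2(t)\,dt$ directly, and after the substitution $t=\rho s$ this is \emph{exactly} $\rho^{2b+1}\int_0^1(s^a-s^b)^2\,ds$ --- no asymptotics needed. The rectangle side is done as you do, by explicit integration giving $a(a+b)\e^{2b+1}\de^3(1+f_{a,b}(\de))$.

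There is a genuine computational error in your $U_\rho$ analysis: after the rescaling $x_1=\xi\rho^q$, $x_2=\xi'\rho$ one has $x_1^a=\xi^a\rho^b$ and $x_2^b=\xi'^b\rho^b$, so both terms in $P$ are of the \emph{same} order $\rho^b$ and neither dominates. The claim that ``the dominant contribution to $P$ comes from $-x_2^b$'' and that the $x_1^a$ part is $o(\rho^{2b+1})$ is false; the correct rescaled integrand is $\xi^{a-1}(\xi^a-\xi'^b)$, which is nonnegative on $\{\xi'^q\le\xi\le\xi'\}$ (consistent with $Q|_{U_\rho}\ge0$ in Remark~\ref{rem:win_num}), whereas your truncated integrand $-\xi^{a-1}\xi'^b$ gives the wrong sign. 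In fact, carrying out the $\xi$--integration of the full integrand yields $\int_0^1(\xi'^a-\xi'^b)^2\,d\xi'$, i.e.\ exactly the paper's constant, with no $o(1)$ remainder. This exactness matters for the lemma as stated: your spurious $(1+o(1))$ in $\rho$ would not straightforwardly produce a clean $f_{a,b}(\de)=O(\de)$. The exponent $2b+1$ in your bookkeeping is correct, so the error does not affect the downstream dichotomy in $a,b$; but the asymptotic argument for $U_\rho$ should be replaced by the exact computation.
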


\begin{proof}
	We have to compute and then compare the effects made by the cut and by the rectangle on the third coordinate, as $\rho,\de\to0$.
	
	For the cut we have
	\begin{equation}
		\label{eq:errk}
		\begin{split}
			\De^{\cut}_3(\rho)&:=\int_{0}^\rho P(\k(t))^2\dot\k_2(t)dt=\int_{0}^{\rho} (t^a\rho^{b-a}-t^b)^2dt\\
			&\stackrel{(t=\rho s)}{=}\rho^{2b+1}\int_{0}^{1} (s^a-s^b)^2ds.
		\end{split}
	\end{equation}

	For the rectangle we use the Stokes theorem, see Lemma~\ref{lem:Stokes}. We compute
	\begin{equation}
		\label{eq:errQ'}
		\begin{split}	
			\De_3^{\cor}(\de)&:=\iint_{E_\de} Q(x_1,x_2)dx_1dx_2\\
			&=\int_{\e(1-\de)}^\e\int_{\e^q}^{\e^q(1+\de)}2ax_1^{a-1}(x_1^a-x_2^b)dx_1dx_2 \qquad (x_1=\e^qx,\; x_2=\e y)\\
			&=2a\e^{2b+1}\left(\int_{1-\de}^1\int_{1}^{1+\de} x^{a-1}(x^a-y^b)dxdy\right)
		\end{split}
	\end{equation}
    We compute the integral appearing in the last line. We have
    \begin{equation}
        \label{eq:calcoloint1}
        \begin{split}
            &\int_{1-\de}^1\int_{1}^{1+\de} x^{a-1}(x^a-y^b)dxdy 
            \\
            &= \de\frac{(1+\de)^2a-1}{2a} - \frac{(1+\de)^a-1}{a}\frac{1-(1-\de)^b}{b+1}
            \\
            &=\frac{1}{2a} \sum_{k=1}^{2a}\binom{2a}{k}\de^{k+1} - 
            \frac{1}{a(b+1)} \sum_{i=1}^{a}\sum_{j=1}^{b+1}(-1)^{j+1}\binom{a}{i}\binom{b+1}{j}\de^{i+j}
            \\
            &:= S_1 - S_2
        \end{split}
    \end{equation}
    The two sums $S_1$ and $S_2$ above reads in the following way
    \begin{align}
        \label{eq:S1}&S_1 = \de^2 + \frac{2a-1}{2}\de^3 + \frac{1}{2a} \sum_{k=3}^{2a}\binom{2a}{k}\de^{k+1}
        \\
        \label{eq:S2}&S_2 = \de^2 - \frac b2\de^3 + \frac{a-1}{2} \de^3 + \frac{1}{a(b+1)} \sum_{i=2}^{a}\sum_{j=2}^{b+1}(-1)^{j+1}\binom{a}{i}\binom{b+1}{j}\de^{i+j}
    \end{align}
    From \eqref{eq:errQ'}, \eqref{eq:calcoloint1}, \eqref{eq:S1}, and \eqref{eq:S2}, we deduce
    \begin{equation}
        \label{eq:errQ}
        \De_3^{\cor}(\de)=a(a+b)\e^{2b+1}\de^3(1+f_{a,b}(\de)),
    \end{equation}
    with
    \begin{equation}
        \label{eq:fabdelta}
        f_{a,b}(\de)= \frac{1}{2a} \sum_{k=3}^{2a}\binom{2a}{k}\de^{k-2} - \frac{1}{a(b+1)} \sum_{i=2}^{a}\sum_{j=2}^{b+1}(-1)^{j+1}\binom{a}{i}\binom{b+1}{j}\de^{i+j-3}.
    \end{equation}

	By Remark~\ref{rem:win_num}, the effects on the third coordinate produced by the cut and by the rectangle have opposite sign. Therefore, the proof of~\eqref{eq:correction} follows by~\eqref{eq:errk} and~\eqref{eq:errQ} by taking $C=C(a,b,\e)=a(b+a)\e^{2b+1}\big(\int_{0}^{1} (s^a-s^b)^2ds\big)^{-1}>0$ and $\rho,\de>0$ small enough. Moreover, by \eqref{eq:fabdelta} it follows $f_{a,b}(\de)=O(\de)$, as $\de\to0$.
\end{proof}

We are now ready to prove Theorem~\ref{thm:main2}. We denote by $L(\cdot)$ the Euclidean length of plane curves. By Remark~\ref{rem:eucl} the Euclidean lengths of $\w$ and $\g_\e$ are the sub-Riemannian lengths of their horizontal lift. The aim of the proof is to show that it is possible to choose small $\rho,\de>0$ such that both~\eqref{eq:correction} and $L(\w)<L(\g_\e)$ are satisfied.

\begin{proof}[Proof of Theorem~\ref{thm:main2}]
	Fix $\e>0$. Let $\rho,\de>0$ be small enough and such that~\eqref{eq:correction} is satisfied (we fix these parameters later). Let $C=C(a,b,\e)>0$ be the constant given by Lemma \ref{lem:correction}. We also recall the notation $q=\frac ba>1$.
	
	We start by computing the gain of length obtained with the cut, as $\rho\to0$. On the one hand, we have $\dot\g(t)=(qt^{q-1},1)$ and so
	\begin{equation}
		\label{eq:lengamma}
		\begin{split}
			L(\g_\rho)&=\int_0^\rho\sqrt{1+q^2t^{2q-2}}dt= \int_0^\rho\left(1+\frac12q^2t^{2q-2}(1+o(1))\right)dt\\
			&=\rho+\frac{q^2}{2(2q-1)}\rho^{2q-1}(1+o(1)).
		\end{split}
	\end{equation}
	On the other hand, we have
	\begin{equation}
		\label{eq:lenk}
		L(\k)=\sqrt{\rho^2+\rho^{2q}}=\rho\left(1+\frac{1}{2}\rho^{2q-2}(1+o(1))\right).
	\end{equation}
	Then, for small $\rho>0$, we get
	\begin{equation}
		\label{eq:gainlen}
		\De L(\rho) := L(\g_\rho) - L(\k) = \frac{(q-1)^2}{2(2q-1)}\rho^{2q-1}(1+o(1)) \geq 
		\frac{(q-1)^2}{4(2q-1)}\rho^{2q-1}.
	\end{equation}
	
	Since $L(\d E)=L(\s)=2\de(\e+\e^q)$, then by~\eqref{eq:gainlen} the competitor $\w$ is shorter than $\g_\e$ as soon as
	\begin{equation}
		\label{eq:upper_bound_delta}
		2\de(\e+\e^q)\leq \frac{(q-1)^2}{4(2q-1)}\rho^{2q-1}.
	\end{equation}
	Moreover, by Lemma~\ref{lem:correction}, we have 
	\begin{equation}
		\label{eq:delta>rho^2b+1/3}
		\de\geq \frac12C^{-\frac13}\rho^{\frac{2b+1}{3}}.
	\end{equation}
	By inserting~\eqref{eq:delta>rho^2b+1/3} into~\eqref{eq:upper_bound_delta}, we obtain the following sufficient condition on $\rho$ for the competitor $\w$ to be shorter than $\g_\e$:
	\begin{equation}
		\label{eq:rho_condition}
		C^{-\frac13}(\e+\e^q)\rho^{\frac{2b+1}{3}}\leq \frac{(q-1)^2}{4(2q-1)}\rho^{2q-1}.
	\end{equation}
	The latter equation is satisfied, when $\rho>0$ is small enough, if and only if
	\begin{equation}
		\frac{2b+1}{3}>2q-1,
	\end{equation}
	which in fact reads
	\begin{equation}
		\label{eq:final_ab}
		b+2>3\,\frac{b}{a}.
	\end{equation}
	If the latter equation is satisfied, then we can chose small $\rho,\de>0$ such that $\g_\e$ is not length-minimizing. Indeed, it is enough to choose $\rho>0$ small enough such that~\eqref{eq:rho_condition} is satisfied. This choice implicitly fixes $\de>0$ by~\eqref{eq:correction} and ensures $L(\w)<L(\g_\e)$. 
	
	It is very easy to see that~\eqref{eq:final_ab} is satisfied if and only if $a\geq3$, or $a=2$ and $b=3$. Since $\e>0$ was arbitrary, the proof is complete.
\end{proof}

\begin{remark}
	\label{rem:a>b}
	We explain how to adapt the proof of Theorem~\ref{thm:main2} in the case $a>b$, exploiting the symmetry within the computations between $a$ and $b$. If $a>b$ then $q=\frac ab$, and the competing curve $\w$ is essentially the same: we cut from 0 to $(\rho,\rho^{q})$ and we add the rectangle $E_\de=(\e,\e^{q})+[-\e\de,0]\times[0,\e^{q}\de]$. Again, we orient $\d E_\de$ such that $\ind(E_\de)=-\ind(U_\rho)$ (see Remark~\ref{rem:win_num}), and $\rho,\de>0$ are small parameters to be fixed at the end. 
	
	With the same computations as in~\eqref{eq:errk}-\eqref{eq:errQ} we get
	\begin{equation}
		\De_3^{\cut}(\rho)=\rho^{2a+1}\int_{0}^{1} (s^a-s^b)^2ds \quad \mr{and} \quad  \De_3^{\cor}(\de)=a(b+a)\e^{2a+q}\de^3(1+o(1)),
	\end{equation}
	where, as before, $\De_3^\cut$ is the error produced by the cut and $\De_3^{\cor}$ is the error produced by the rectangle, both in absolute value. The power of $\rho$ in $\De_3(\rho)$ replaces $b$ with $a$ (as expected due to the symmetry in $a,b$), while the slight modification in the power of $\e$ into $\De_3(\de)$ is irrelevant ($\e>0$ is fixed). Condition~\eqref{eq:correction} then implies that $\w$ is a competitor as soon as
	\begin{equation}
		\label{eq:correction'}
		\rho^{2a+1}=\bar C\de^3(1+\bar f_{a,b}(\de)),
	\end{equation}
	for some $\bar C=\bar C(a,b,\e)>0$ and $\bar f_{a,b}(\de)=O(\de)$.
	
	The computations for the gain of length in~\eqref{eq:lengamma}-\eqref{eq:lenk} are identical, then~\eqref{eq:gainlen} remains the same.

	Finally, with similar estimates as in~\eqref{eq:upper_bound_delta}-\eqref{eq:delta>rho^2b+1/3}-\eqref{eq:rho_condition}, we deduce that $\w$ is shorter than $\g_\e$ as soon as
	\begin{equation}
		\label{eq:final_ba}
		a+2>3\frac ab,
	\end{equation}
	for a suitable choice of $\rho>0$ small enough. Once a small $\rho>0$ is fixed, then also the small $\de>0$ is automatically determined. The conclusion is symmetric in $a,b$ with respect to the one obtained in the proof of Theorem~\ref{thm:main2}.
\end{remark}

\begin{remark}
	\label{rem:add_c}	
	The statement of Theorem~\ref{thm:main2} is stable with respect to arbitrary increases in the power of the polynomial $P$ defining $\psi$. That is, the statement remains true for every choice of $\psi$ of the form $\psi=(x_1^a-x_2^b)^c$, for every natural numbers $a,b,c\geq2$. Indeed, by repeating the same computations as in the proof Theorem~\ref{thm:main2} and in Remark~\ref{rem:a>b}, we obtain, in place of~\eqref{eq:final_ab}-\eqref{eq:final_ba}, that $\w$ is shorter than $\g_\e$ as soon as
	\begin{equation}
		\label{eq:final_abc}
		\frac c2\max\{a,b\}+2 > 3q.
	\end{equation}
	Since $c\geq2$ the latter equation is implied by~\eqref{eq:final_ab} when $b>a$, and by~\eqref{eq:final_ba} if $a>b$.
\end{remark}

\bibliographystyle{plain}

\bibliography{Biblio}

\end{document}